\DeclareMathOperator{\dist}{dist}
\DeclareMathOperator{\exc}{Exc}
\theoremstyle{plain}
  \newtheorem{lemma}[equation]{Lemma}
  \newtheorem{proposition}[equation]{Proposition}
  \newtheorem{theorem}[equation]{Theorem}
  \newtheorem{corollary}[equation]{Corollary}
    \newtheorem{question}[equation]{Question}
    \newtheorem{conjecture}[equation]{Conjecture}
\theoremstyle{definition}
  \newtheorem{definition}[equation]{Definition}
\theoremstyle{remark}
  \newtheorem{remark}[equation]{Remark}
\renewcommand{\thesection}{\arabic{section}}
\renewcommand{\theequation}{\thesection.\arabic{equation}}
 \DeclareFontFamily{U}{manual}{}
 \DeclareFontShape{U}{manual}{m}{n}{ <->  manfnt }{}
 \newcommand{\manfntsymbol}[1]{%
    {\fontencoding{U}\fontfamily{manual}\selectfont\symbol{#1}}}
\endgroup\end{trivlist}}
  \DeclareFontFamily{OT1}{pzc}{}
  \DeclareFontShape{OT1}{pzc}{m}{it}{<-> s * [1.100] pzcmi7t}{}
  \DeclareMathAlphabet{\mathpzc}{OT1}{pzc}{m}{it}
\newif\ifhascomments \hascommentstrue
  \newcommand{\david}[1]{{\color{red}[[\ensuremath{\bigstar\bigstar\bigstar} #1]]}}
  \newcommand{\matt}[1]{{\color{red}[[\ensuremath{\spadesuit\spadesuit\spadesuit} #1]]}}
    \newcommand{\brian}[1]{{\color{blue}[[\ensuremath{\clubsuit\clubsuit\clubsuit} #1]]}}
  \newcommand{\david}[1]{}
  \newcommand{\matt}[1]{}
    \newcommand{\brian}[1]{}
\newcommand{\A}{\mathcal A}
\newcommand{\C}{\mathcal C}
\renewcommand{\emptyset}{\varnothing}
\DeclareMathOperator{\Endo}{\ensuremath{\mathcal{E}\kern-.125em\mathpzc{nd}}}
\newcommand{\F}{\mathcal F}
\newcommand{\G}{\mathcal G}
\DeclareMathOperator{\Hom}{\ensuremath{\mathcal{H}\kern-.125em\mathpzc{om}}}
\newcommand{\id}{\mathrm{id}}
\newcommand{\M}{\mathcal M}
\newcommand{\N}{\mathcal N}
\renewcommand{\O}{\mathcal O}
\DeclareMathOperator{\Pic}{Pic}
\newcommand{\PP}{\mathbb{P}}
\newcommand{\QQ}{\mathbb Q}
\newcommand{\RR}{\mathbb R}
\renewcommand{\setminus}{\smallsetminus}
\newcommand{\Z}{\mathcal{Z}}
 \def\ari[#1]{\ar@{^(->}[#1]}
 \def\are[#1]{\ar[#1]^{\txt{\'et}}}
 \def\areh[#1]{\ar[#1]|{\txt{$H$-eq}}^{\txt{\'et}}}
 \def\ars[#1]{\ar@{->>}[#1]}
 \newcommand{\dplus}{\ar@{}[d]|{\mbox{$\oplus$}}}
 \newcommand{\dtimes}{\ar@{}[d]|{\mbox{$\times$}}}
\theoremstyle{plain}
\newtheoremstyle{named}{}{}{\itshape}{}{\bfseries}{.}{.5em}{\thmnote{#3 }#1}
\theoremstyle{named}
\begin{document}
\newtheorem{thm}{Theorem}[section]
\newtheorem{lem}[thm]{Lemma}
\newtheorem{dfn}[thm]{Definition}
\newtheorem{cor}[thm]{Corollary}
\newtheorem{conj}[thm]{Conjecture}
\newtheorem{clm}[thm]{Claim}
\theoremstyle{remark}
\newtheorem{exm}[thm]{Example}
\newtheorem{rem}[thm]{Remark}
\newtheorem{que}[thm]{Question}
\def\N{{\mathbb N}}
\def\G{{\mathbb G}}
\def\F{{\mathbb F}}
\def\Q{{\mathbb Q}}
\def\R{{\mathbb R}}
\def\C{{\mathbb C}}
\def\P{{\mathbb P}}
\def\A{{\mathbb A}}
\def\Z{{\mathbb Z}}
\def\v{{\mathbf v}}
\def\w{{\mathbf w}}
\def\x{{\mathbf x}}
\def\O{{\mathcal O}}
\def\M{{\mathcal M}}
\def\kbar{{\bar{k}}}
\def\tr{\mbox{Tr}}
\def\id{\mbox{id}}

\title{Approximating rational points on surfaces}

\author{Brian Lehmann}
\address{Department of Mathematics \\
Boston College  \\
Chestnut Hill, MA \, \, 02467}
\email{lehmannb@bc.edu}

\author{David McKinnon}
\address{University of Waterloo \\
Department of Pure Mathematics \\
Waterloo, Ontario \\
Canada  N2L 3G1}
\email{dmckinnon@uwaterloo.ca}

\author{Matthew Satriano}
\address{University of Waterloo \\
Department of Pure Mathematics \\
Waterloo, Ontario \\
Canada  N2L 3G1}
\email{msatrian@uwaterloo.ca}

\thanks{The first author was supported by Simons Foundation grant Award Number 851129.  The second and third authors were partially supported by Discovery Grants from the Natural Sciences and Engineering Research Council. The third author was additionally supported by a Mathematics Faculty Research Chair.}

\begin{abstract}
Let $X$ be a smooth projective algebraic variety over a number field $k$ and $P\in X(k)$. In 2007, the second author conjectured that, in a precise sense, if rational points on $X$ are dense enough, then the best rational approximations to $P$ must lie on a curve. We present a strategy for deducing a slightly weaker conjecture from Vojta's conjecture, and execute the strategy for the full conjecture for split surfaces.
\end{abstract}

\maketitle
\tableofcontents

\section{Introduction}

A foundational result of Dirichlet states that if $x$ is an irrational number then there are infinitely many rational numbers $\frac{a}{b}$ in lowest terms satisfying the equation $|x-\frac{a}{b}|<\frac{1}{b^2}$. Said another way, for any $x\in\RR$ we have an {\em approximation exponent} $\tau_{x}\in (0,\infty]$ which is the unique number such that
$$\left|{x-\frac{a}{b}}\right| \leq \frac{1}{b^{\tau_{x}+\delta}}$$
has only finitely many solutions $\frac{a}{b}\in \QQ$ whenever $\delta>0$, and has infinitely many solutions whenever $\delta<0$.  
Dirichlet's aforementioned Approximation Theorem then states $\tau_x\geq2$ for irrational $x$. On the other hand, for $x\in \RR$ algebraic of degree $d$ over $\QQ$ Liouville \cite{L} proved the upper bound $\tau_x\leq d$. This bound was subsequently improved by Thue \cite{thue-approx}, Siegel \cite{siegel-approx}, and independently by Dyson \cite{dyson-approx} and Gelfand, culminating in Roth's 1955 theorem \cite{roth-rat-approx} that $\tau_x\leq2$ for all algebraic $x\in\RR$. Therefore, Dirichlet's Theorem and Roth's Theorem combine to show that $\tau_x=2$ for all irrational $x$.

McKinnon and Roth \cite{MR} generalized the definition of $\tau_x$ to polarized projective varieties $(X,A)$ over a number field $k$ by replacing the function $|x-\frac{a}{b}|$ by a distance function $\dist_v(x,\cdot)$ depending on a place $v$ of $k$, and replacing $b^{-\tau_x}$ by a quantity coming from the height function $H_A(\cdot)$ associated to $A$. In order to make this set-up better behaved with respect to changes in $A$, they moved the exponent $\tau_x$ from the height to the distance and relabeled it as $\alpha$. Given any sequence of rational points $\{x_i\}$ approximating an algebraic point $P$, one then obtains an associated \emph{approximation constant} $\alpha(\{x_i\},P,A)$; see Section \ref{sec:prelim} for the precise definition.

\begin{definition}
Let $(X,A)$ be a projective variety over a number field $k$ equipped with a $\mathbb{Q}$-Cartier divisor $A$.  For any algebraic  point $P$, we let
\begin{itemize}
\item the \emph{approximation constant} $\alpha(P,A)$ denote the infimum of $\alpha(\{x_i\},P,A)$ over all sequences of $k$-rational points $\{ x_{i} \}$ approximating $P$,
\item the \emph{essential approximation constant} $\alpha_{ess}(P,A)$ denote the infimum of all $\alpha(\{x_i\},P,A)$ over all sequences of $k$-rational points $\{ x_{i} \}$ approximating $P$ such that $\{x_i\}$ is Zariski dense.
\end{itemize}
Lastly, if $P$ lies on a subvariety $i\colon Z\hookrightarrow X$, then considering $P$ as a point on $Z$, we define $\alpha(P,A|_Z):=\alpha(P,i^*A)$.
\end{definition}

The focus of our paper is a conjecture introduced by the second author:

\begin{conjecture}[{\cite[Conjecture 2.7]{M2}}]
\label{conj:ratcurve}
	Let $X$ be a smooth projective variety defined over a number field $k$, and $P\in X$ a $k$-rational point.  Let $A$ be an ample $\mathbb{Q}$-Cartier divisor on $X$.  If $\alpha(P,A)<\infty$, then there is a curve of best $A$-approximation to $P$.  In other words, there is a curve $C$ such that
	\[\alpha(P,A|_C)=\alpha(P,A)\]
\end{conjecture}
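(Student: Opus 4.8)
The plan is to translate the problem into birational geometry via the position of $K_X+A$ relative to the pseudo-effective cone $\PsEff(X)$. Since both sides of the claimed equality are unchanged when $A$ is rescaled by a positive rational number, I would first normalize so that $K_X+A$ lies on the boundary of $\PsEff(X)$. Observe also that $\alpha(P,A|_C)\geq\alpha(P,A)$ automatically, since a sequence of rational points on $C$ is in particular a sequence on $X$; so the real content is to exhibit a \emph{single} curve through $P$ realizing the infimum. If $\kappa(X)\geq 0$, then Vojta's conjecture already forces every sequence of best approximation to be non--Zariski dense (this is \cite[Theorem~4.2]{M2}), and in dimension two a non--Zariski dense sequence lies on a curve, which one then checks is a curve of best approximation. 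Hence the crucial case is $\kappa(X)=-\infty$.

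In that case I would split the argument into two steps. \textbf{Step 1:} show $\alpha_{ess}(P,A)\geq\dim X$. This should follow from the essential boundedness of $P$, which is a consequence of Vojta's conjecture for $X$, combined with the fact that $K_X+A$ lies on the boundary of $\PsEff(X)$: this normalization is exactly what makes the relevant Vojta inequality sharp enough to produce the bound $\dim X$ rather than something weaker. \textbf{Step 2:} produce a curve $C$ through $P$ with $\alpha(P,A|_C)\leq\dim X$. Granting both, either $C$ is already a curve of best approximation and we are done, or every sequence of best approximation has constant strictly below $\dim X\leq\alpha_{ess}(P,A)$ and is therefore not Zariski dense, so in the surface case it is supported on a curve, again finishing the proof.

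To carry out Step 2 I would run the MMP for $K_X+A$ to obtain a birational map $\phi\colon X\dashrightarrow X'$ with $\phi_*(K_X+A)$ semiample; for a surface $X'$ is smooth, and on the resulting Fano-type model one invokes the classical bend-and-break bound that through any point of a smooth Fano variety there passes a rational curve of anticanonical degree at most $\dim X$, unless $X'\cong\PP^n$ --- a case already handled in \cite{M2}. Taking the strict transform of such a curve back on $X$ gives the desired $C$, except that one must argue separately over the locus where the relevant contraction fails to be an isomorphism near $P$.

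I expect the main obstacle to be Step 2, and it manifests in two ways. First, the rational curve produced by bend-and-break is generally not defined over $k$, only over some finite extension $\ell$; so without extra input one can only establish the statement after replacing $k$ by $\ell$, and descending the curve to $k$ itself (as the full strength of Conjecture~\ref{conj:ratcurve} over $k$ demands) requires the more delicate arguments that are available for split surfaces. Second, in dimension $\geq 3$ the MMP output $X'$ may be singular and there is no clean replacement for the Fano curve bound, which is why the unconditional result here is restricted to split surfaces while the higher-dimensional case stays conditional and incomplete.
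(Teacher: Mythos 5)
Your proposal follows essentially the same route as the paper: reduce to negative Kodaira dimension via \cite[Theorem~4.2]{M2}, use essential boundedness (from Vojta) to get $\alpha_{ess}(P,A)\geq\dim X$ after normalizing $K_X+A$ to the pseudo-effective boundary, and then use the $(K_X+A)$-MMP to produce a curve $C$ through $P$ with $\alpha(P,A|_C)\leq\dim X$, treating $\PP^2$ and the non-isomorphism locus separately. You also correctly identify the two obstructions (field of definition of the low-degree rational curve, and singular MMP outputs in dimension $\geq 3$) that confine the unconditional argument to split surfaces, which is exactly the scope of the paper's Theorem~\ref{thm:generalsurface}.
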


In \cite{M2}, this conjecture was proved for projective space, for split smooth rational surfaces of Picard rank at most four, and for a number of other split rational surfaces. In \cite{MR}, the second author and Roth proved the conjecture for split smooth cubic surfaces and gave the first cases of the conjecture for non-split surfaces. Casta\~neda in \cite{Ca} proved many more instances of the conjecture for split rational surfaces, including all split smooth rational surfaces of Picard rank at most five. Huang proved Conjecture \ref{conj:ratcurve} for some split toric varieties in \cite{Hu}, providing the first nontrivial examples in higher dimensions. In \cite{MS}, the second and third authors proved that Vojta's Conjecture implies Conjecture \ref{conj:ratcurve} for all $\Q$-factorial split toric varieties.

In this paper, we prove Conjecture \ref{conj:ratcurve} for all split surfaces contingent on Vojta's conjecture. Furthermore, using the Minimal Model Program (MMP), we provide a general framework which we expect will be useful in handling higher dimensional cases of the conjecture.

We introduce the following definition.

\begin{definition}\label{dfn:essentiallybounded}
	Let $X$ be a smooth algebraic variety defined over a number field $k$, and let $P\in X$ be any $k$-rational point.  Let $K_X$ be the canonical divisor class on $X$.  Then $P$ is {\em essentially bounded on $X$} if for every ample divisor $A$ and every rational $\epsilon>0$, we have
	\[\alpha_{ess}(P,-K_X+\epsilon A)\geq\dim X \]
\end{definition}

Our first result is that Vojta's Conjecture implies every point is essentially bounded. The following proposition is a variant of \cite{MS-erratum}.

\begin{proposition}\label{prop:Vojta->ess-boudned}
Let $X$ be a smooth, projective variety defined over a number field $k$, and let $P\in X(k)$.  Then Vojta's Conjecture for $X$ implies that $P$ is essentially bounded.
\end{proposition}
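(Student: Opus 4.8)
The plan is to translate the statement "$P$ is essentially bounded" into an assertion about integral points that follows from Vojta's inequality on a suitable blowup. The key observation is that bounding $\alpha_{ess}(P, -K_X + \epsilon A)$ from below amounts to showing that Zariski-dense sequences of rational points cannot approach $P$ too quickly in the $v$-adic distance, and the speed of approach is governed by how $v$-adically close the points are to $P$ measured against their height. This is exactly the kind of statement Vojta's conjecture controls once one passes to the blowup $\pi\colon Y \to X$ of $X$ at $P$, where proximity to $P$ becomes proximity to the exceptional divisor $E$.

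First I would set up the geometry: let $\pi\colon Y\to X$ be the blowup at $P$ with exceptional divisor $E\cong \PP^{n-1}$, where $n=\dim X$, so that $K_Y = \pi^*K_X + (n-1)E$. Fix an ample divisor $A$ on $X$ and a rational $\epsilon>0$. Given a Zariski-dense sequence $\{x_i\}$ of $k$-rational points approximating $P$, after passing to a subsequence we may assume it lies in the complement of any fixed proper closed subset; in particular we may lift it to a sequence on $Y$ avoiding $E$. Applying Vojta's conjecture on $Y$ with the divisor $K_Y + \epsilon' \pi^*A$ (for suitable small rational $\epsilon'$) and an ample auxiliary divisor, together with the fact that the height of $x_i$ with respect to $\pi^*A$ agrees up to bounded error with $H_A(x_i)$, one obtains an inequality relating $h_{K_Y}(x_i)$, $h_{\pi^*A}(x_i)$, and the proximity term $m_{v}(x_i, E)$. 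The proximity to $E$ is, by definition of the blowup, comparable to $-\log \dist_v(x_i, P)$, so rearranging yields a lower bound on $\dist_v(x_i,P)$ in terms of $H_{-K_X+\epsilon A}(x_i)$, which is precisely the statement that $\alpha(\{x_i\}, P, -K_X+\epsilon A) \geq n$. Taking the infimum over Zariski-dense sequences gives $\alpha_{ess}(P, -K_X + \epsilon A) \geq n = \dim X$.

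The bookkeeping that requires care is the choice of $\epsilon'$ relative to $\epsilon$ and the interplay of error terms: Vojta's conjecture produces an inequality of the shape $h_{K_Y + D}(x_i) + m_v(x_i, E) \leq \epsilon'\, h_{H}(x_i) + O(1)$ for $x_i$ outside a proper closed subset, and one must verify that the coefficient $n-1$ appearing in $K_Y = \pi^*K_X + (n-1)E$ combines correctly with the exceptional-divisor contribution to yield exactly the factor $\dim X$ rather than $\dim X - 1$. This is where the passage from $-K_X$ to $-K_X + \epsilon A$ is essential: the $\epsilon A$ slack is what lets Vojta's "+\epsilon'" error on the right-hand side be absorbed, and one should keep $\epsilon'$ strictly smaller than the quantity determined by $\epsilon$ and the relevant height comparisons. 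I would also note that the argument is insensitive to replacing $\{x_i\}$ by a subsequence and to finite extensions of $k$ only in inessential ways, so no difficulty arises there.

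The main obstacle I anticipate is purely technical rather than conceptual: carefully matching the normalizations of the distance function $\dist_v$, the local proximity (Weil) function $m_v(\,\cdot\,, E)$ for the exceptional divisor, and the height $H_A$, so that the exceptional-divisor proximity term on $Y$ is genuinely comparable to $-\log\dist_v(x_i,P)$ with the correct constant. Since $E$ is cut out near $P$ by the maximal ideal, $\dist_v(x_i, P)^{\,\mathrm{ord}}$ matches the proximity to $E$ up to $O(1)$, but one must confirm this holds uniformly along the sequence and with the multiplicity bookkeeping consistent with the definition of $\alpha$ recalled in Section~\ref{sec:prelim}. Once these normalizations are pinned down, the inequality $\alpha_{ess}(P, -K_X+\epsilon A) \geq \dim X$ drops out directly from Vojta's conjecture applied on $Y$.
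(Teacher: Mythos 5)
Your proposal is correct in outline, but it takes a genuinely different route from the paper. The paper never leaves $X$: it applies Vojta's inequality directly on $X$, taking $D$ to be a normal crossings divisor with multiplicity $\dim X$ at $P$ (e.g.\ a union of $\dim X$ transverse smooth divisors through $P$ in $|mA|$), so that $m_S(D,Q)\geq -\log\dist(P,Q)^{\dim X}+O(1)$ and the exponent $\dim X$ comes from the number of components through $P$. You instead blow up at $P$ and apply Vojta on $Y=\mathrm{Bl}_P X$ with $D=E$; the exponent $\dim X$ then arises as $1+(n-1)$, one copy of $m_S(E,\cdot)$ from the proximity term and $n-1$ copies from the discrepancy in $K_Y=\pi^*K_X+(n-1)E$, using that $h_E(Q)\geq m_S(E,Q)+O(1)$ for $Q\notin E$ and that $\lambda_{E,v}(Q)=-\log\dist_v(\pi(Q),P)+O(1)$. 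Both are standard dual encodings of ``proximity to $P$ with weight $n$,'' and your bookkeeping does close up correctly; the paper's choice is marginally more elementary (no canonical bundle formula, no comparison of distance functions under $\pi$). One caveat worth flagging: the proposition as stated assumes Vojta's Conjecture \emph{for $X$}, whereas your argument invokes it for the blowup $Y$; this is harmless if one grants the conjecture for all smooth projective varieties, but it does not literally prove the statement as phrased, and the paper's on-$X$ argument avoids the issue entirely.
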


The main theorem of our paper is that Conjecture~\ref{conj:ratcurve} is true for essentially bounded points on split surfaces. We say a surface $X$ over $k$ is \emph{split} if the Galois action on $\Pic(X)$ is trivial. We note that this is a more general definition than other ones in the literature; for example, it includes Brauer-Severi varieties.

\begin{theorem}\label{thm:generalsurface}
Let $X$ be a split, smooth, projective surface defined over a number field $k$. Let $A$ be an ample $\Q$-Cartier divisor on $X$ and let $P\in X(k)$. If $P$ is essentially bounded on $X$, then Conjecture~\ref{conj:ratcurve} is true for $P$ on $X$ with respect to $A$.

In particular, for an arbitrary smooth projective surface $X$, there is a finite extension $\ell/k$ such that Conjecture~\ref{conj:ratcurve} is true for all $A$ and all essentially bounded points of $X\times_k\ell$.
\end{theorem}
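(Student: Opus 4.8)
The plan is to reduce to a birational model on which the structure of curves through $P$ is controlled, run the MMP for $(K_X + A)$ after a suitable rescaling, and then locate an explicit curve of low anticanonical degree passing through $P$. First I would reduce to the case $\alpha(P,A) < \infty$, since otherwise there is nothing to prove, and then rescale $A$ so that $K_X + A$ lies on the boundary of the pseudo-effective cone $\PsEff(X)$; this is legitimate because scaling $A$ by a positive rational scales $\alpha(P,A)$ and $\alpha(P,A|_C)$ by the same factor, so Conjecture \ref{conj:ratcurve} is invariant under such scaling. With this normalization, for every rational $\epsilon > 0$ the divisor $A - \epsilon A' = -K_X + (\text{something})$ can be arranged so that Definition \ref{dfn:essentiallybounded} applies; combining essential boundedness with the fact that $\alpha_{ess}(P, \cdot)$ is continuous and monotone in the divisor, one gets the lower bound
\[
\alpha_{ess}(P, A) \geq \dim X = 2.
\]
This is Step (1) of the outlined strategy and it is the easy half.

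The heart of the argument is Step (2): produce a curve $C$ on $X$ with $P \in C$ and $\alpha(P, A|_C) \leq 2$. Here I would run the $(K_X + A)$-MMP, obtaining a birational morphism (a composite of contractions of $(K_X+A)$-negative extremal rays) $\pi \colon X \to X'$ with $\pi_*(K_X + A)$ semiample; since $X$ is a smooth surface, $X'$ is again a smooth projective surface, and splitness is preserved because contracting a ray only quotients $\Pic$ by a Galois-fixed class. The endpoint $X'$ is either a minimal model (Kodaira dimension $\geq 0$ case, where Theorem 4.2 of \cite{M2} already gives a Zariski-degenerate sequence and hence, in dimension two, a curve) or a Mori fiber space: either $X' \cong \PP^2$, or $X' \cong \PP^1$, or $X' \to B$ is a conic bundle over a curve. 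For $\PP^2$ the conjecture is known by \cite{M2}. For the conic bundle case, the fiber $F$ of $X' \to B$ through $\pi(P)$ is a rational curve of anticanonical degree $2$ meeting the relevant section appropriately, and I would pull it back: the strict transform $C = \overline{\pi^{-1}(F)}$ (or $\pi^{-1}$ of the fiber when $\pi$ is an isomorphism near $P$) is a curve through $P$ with $(-K_X)\cdot C$ controlled, and on $C$ one computes $\alpha(P, A|_C)$ directly using that $A|_C$ is a $\QQ$-divisor on a (possibly singular) rational curve, invoking the Liouville-type / Roth-type estimates of \cite{MR} on the normalization $\PP^1 \to C$. The inequality $\alpha(P, A|_C) \le 2$ follows because $A \equiv -K_X + (K_X+A)$ with $K_X + A$ on the pseudo-effective boundary contributing nothing to leading order, so $A \cdot C$ matches $-K_X \cdot C \le 2$, and the approximation constant of a point on $\PP^1$ with respect to a degree-$d$ divisor is $d$ by Roth. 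Combining with Step (1): if $C$ is a curve of best approximation we are done, and if not then every best-approximation sequence must already beat $\alpha(P,A|_C) \le 2 \le \alpha_{ess}(P,A)$, forcing it to be Zariski-degenerate, which on a surface means it lies on a curve — giving a curve of best approximation.

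The main obstacle is the field of definition of $C$. The curve of low anticanonical degree furnished by Mori theory (e.g.\ \cite[Theorem V.1.6]{Ko}) need not be defined over $k$, only over some finite extension $\ell/k$; for split surfaces one can do better because the triviality of the Galois action on $\Pic(X)$ means the numerical class of $C$ is Galois-invariant and the relevant fibration $X' \to B$ can be taken over $k$, so the fiber through the $k$-point $\pi(P)$ is a $k$-curve — but one must check carefully that the MMP can be run $k$-rationally (each extremal contraction is $\Gal$-equivariant here precisely because of splitness) and that $\pi(P)$ does not land on a "bad" locus (a singular fiber node, or the indeterminacy of $\pi^{-1}$), in which case a separate argument near $P$ is needed. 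The second, unconditional sentence of the theorem then follows by applying the first part over a finite extension $\ell/k$ chosen to split $X$ (such $\ell$ exists since $\Gal(\bar k/k)$ acts on the finitely generated group $\Pic(X_{\bar k})$ through a finite quotient) together with Proposition \ref{prop:Vojta->ess-boudned}, which guarantees essential boundedness for every point once Vojta's Conjecture is assumed.
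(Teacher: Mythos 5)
Your overall strategy is the paper's: rescale $A$ so that $K_X+A$ is pseudo-effective, use essential boundedness to get $\alpha_{ess}(P,A)\geq 2$, run the $(K_X+A)$-MMP over $k$ (legitimate by splitness), and exhibit a rational curve $C$ through $P$ with $\alpha(P,A|_C)\leq 2$ via \cite[Theorem 2.16]{MR}, whence $\alpha(P,A)$ is computed on a curve. However, two cases are not actually handled. First, you run the MMP all the way to the Mori fibre space and only then remark that $\pi(P)$ may land in a ``bad'' locus, deferring ``a separate argument near $P$.'' That deferred case is not a nuisance at the margins --- it is one of the two main cases of the proof. The fix (and what the paper does) is to stop the MMP at the \emph{first} contraction whose exceptional locus contains the image of $P$: the contracted $(-1)$-curve $E$ through that image satisfies $(K+\varphi_*A)\cdot E\leq 0$ and $-K\cdot E=1$, hence $\varphi_*A\cdot E\leq 1\leq 2$, and since all earlier steps are isomorphisms near $P$, its strict transform on $X$ is exactly the curve you need (Proposition~\ref{prop:stricttransformcalculation} and Theorem~\ref{theo:mmpcomputedonsubvar}). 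Your version, which only offers the fibre of the terminal conic bundle, has no curve to propose when $P$ is absorbed by an earlier divisorial contraction.

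Second, the endpoint $X'\cong\PP^2$ is not disposed of by ``the conjecture is known for $\PP^2$.'' That result concerns $\alpha(\cdot,\mathcal{O}(1))$ on $\PP^2$ itself, whereas you need a curve on $X$ with $A\cdot C\leq 2$; the strict transform of a \emph{general} line through $\varphi(P)$ has $A\cdot C=\varphi_*A\cdot\ell=d$, and $d\geq 3$ because $K_{\PP^2}+\varphi_*A$ is pseudo-effective. Unless $X\cong\PP^2$, one must choose the line to pass through a point of the locus where $\varphi^{-1}$ fails to be an isomorphism, so that the strict transform meets a $\varphi$-exceptional divisor $E$ and $-K_X\cdot C\leq -\varphi^*K_{\PP^2}\cdot C-E\cdot C\leq 3-1=2$; this is Lemma~\ref{l:div-contraction-case-pn}, and it is indispensable. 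Everything else in your write-up --- the lower bound $\alpha_{ess}(P,A)\geq 2$, the computation of $\alpha$ on a rational curve via the normalization, the use of splitness to keep the contractions and the fibre through the $k$-point defined over $k$, the concluding ``either $C$ computes $\alpha$ or every better sequence is Zariski-degenerate and hence lies on a curve'' step, and the base change to a splitting field $\ell$ for the last assertion --- matches the paper (your invocation of Vojta in that last step is unnecessary, since the statement already restricts to essentially bounded points).
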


Putting Proposition \ref{prop:Vojta->ess-boudned} and Theorem \ref{thm:generalsurface} together, we have

\begin{corollary}\label{thm:generalsurface-Vojta-cor}
Let $X$ be a split, smooth, projective surface defined over a number field $k$. If Vojta's Conjecture is true for $X$, then Conjecture~\ref{conj:ratcurve} is true for all $P$ and $A$ on $X$.
\end{corollary}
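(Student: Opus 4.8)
The plan is to obtain the corollary by simply chaining the two results proved above; no new ideas are required. Fix a split, smooth, projective surface $X$ over $k$ and assume Vojta's Conjecture holds for $X$. We must check Conjecture~\ref{conj:ratcurve} for an arbitrary $k$-rational point $P\in X(k)$ and an arbitrary ample $\QQ$-Cartier divisor $A$ on $X$; if $\alpha(P,A)=\infty$ the assertion is vacuous, so we may assume $\alpha(P,A)<\infty$.

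First I would apply Proposition~\ref{prop:Vojta->ess-boudned}: since Vojta's Conjecture holds for $X$, the point $P$ is essentially bounded on $X$ in the sense of Definition~\ref{dfn:essentiallybounded}, i.e.\ $\alpha_{ess}(P,-K_X+\epsilon A')\geq 2$ for every ample $A'$ and every rational $\epsilon>0$. Now invoke Theorem~\ref{thm:generalsurface} with this very $P$ and $A$: its hypotheses --- $X$ split, smooth and projective, $A$ ample and $\QQ$-Cartier, and $P$ essentially bounded --- are exactly what we have just verified, so it produces a curve $C$ on $X$ with $\alpha(P,A|_C)=\alpha(P,A)$, which is precisely the conclusion of Conjecture~\ref{conj:ratcurve}. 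Since $P$ and $A$ were arbitrary, the corollary follows.

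The step I expect to be the real obstacle is not in this deduction, which is purely formal once the quantifiers are matched, but in the two inputs. In Proposition~\ref{prop:Vojta->ess-boudned} one must extract the lower bound $\alpha_{ess}(P,-K_X+\epsilon A)\geq \dim X$ from Vojta's inequality for $X$, the key maneuver being to rescale so that $K_X+A$ lies on the boundary of the pseudo-effective cone. In Theorem~\ref{thm:generalsurface} one runs the MMP for $K_X+A$ and locates a rational curve of bounded anticanonical degree through (the image of) $P$ on the output, then transports it back to $X$; splitness of $X$ is what allows one to avoid the finite base extension $\ell/k$ that necessarily appears for non-split surfaces in the second assertion of that theorem. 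Relative to those, the present corollary is a bookkeeping statement.
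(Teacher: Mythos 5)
Your proof is correct and matches the paper exactly: the corollary is obtained there by the same two-step chaining, namely Proposition~\ref{prop:Vojta->ess-boudned} to deduce essential boundedness of $P$ from Vojta's Conjecture, followed by Theorem~\ref{thm:generalsurface} applied to the essentially bounded point. The paper treats this as immediate and offers no further argument, so nothing is missing from your write-up.
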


%

Let us describe the strategy of proof for Theorem \ref{thm:generalsurface}. By Theorem~4.2 of \cite{M2}, we are able to reduce to the case of negative Kodaira dimension. In this case, we find a curve $C$ such that $\alpha(P,A|_C) \leq \alpha_{ess}(P,A)$.  This tells us that no Zariski dense sequence can have an $\alpha$-value less than that computed on $C$, and so the $\alpha$-value at $P$ must be computed by a (possibly different) curve $C'$.  That is, we have $\alpha(P,A|_{C'})=\alpha(P,A)$, proving Conjecture \ref{conj:ratcurve}.

To establish the bound $\alpha(P,A|_C) \leq \alpha_{ess}(P,A)$, we use tools in birational geometry.  We begin by rescaling the ample divisor $A$ so that $K_X+A$ is on the boundary of the pseudo-effective cone.  We then seek curves with small intersection against both $-K_{X}$ and $K_{X}+A$, thereby  achieving our goal of finding a curve with small intersection against $A$ (and thus with a small value of $\alpha(P,A|_{C})$).  Since Mori's Bend-and-Break theorem allows us to find rational curves with small intersection against $-K_{X}$, curves contracted by the $(K_{X} + A)$-MMP are good candidates for our desired curves.

By running the MMP for the pair $(X,A)$, we obtain a birational morphism $\varphi\colon X\to X'$ and a canonical model $\pi: X' \to Z$. If $P$ is in the $\varphi$-exceptional locus, we show that an exceptional curve satisfies the desired conclusion of the conjecture. Otherwise, we produce a rational curve $C'$ through $\varphi(P)$ with small anti-canonical degree and show that its strict transform $C$ on $X$ satisfies $\alpha(P,A|_C)\leq 2$. Since essential boundedness implies that $2\leq\alpha_{ess}(P,A)$, this curve $C$ achieves our goal. 

Much of our set-up applies in higher dimensions as well:~at least over $\overline{k}$ one can use the MMP to find rational curves of low degree through rational points $P$ on $X$.  
Just as in the case of surfaces, the existence of a curve $C$ with $\alpha(P,A|_C) \leq \alpha_{ess}(P,A)$ would imply that the $\alpha$-value is computed on a proper subvariety.  Unlike the case of surfaces, we cannot directly conclude that $\alpha$ is computed by a curve $C$.  Thus in higher dimensions the strategy is best suited for establishing the following weaker conjecture.

\begin{conjecture}\label{conj:degenerate}
	Let $X$ be a smooth, projective variety defined over a number field $k$, and $P\in X$ a $k$-rational point.  Let $A$ be an ample line bundle on $X$.  If $\alpha(P,A)<\infty$, then there is a sequence of best $A$-approximation to $P$ that is not Zariski dense.  In other words, there is a proper subvariety $V$ such that
	\[\alpha(P,A|_V)\leq\alpha_{ess}(P,A)\]
\end{conjecture}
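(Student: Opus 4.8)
## Proof proposal for Theorem~\ref{thm:generalsurface}

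\textbf{Setup and reduction to negative Kodaira dimension.} The plan is to prove the stronger inequality: there is a curve $C\subseteq X$ with $\alpha(P,A|_C)\leq \alpha_{ess}(P,A)$. Granting this, no Zariski-dense sequence can beat the constant computed on $C$, so the infimum $\alpha(P,A)$ must be realized by \emph{some} curve $C'$ (taking $C'=C$ if $\alpha(P,A|_C)=\alpha(P,A)$, and otherwise noting any sequence achieving a value below $\alpha(P,A|_C)$ is automatically Zariski-degenerate, hence supported on a curve through $P$), giving Conjecture~\ref{conj:ratcurve}. First I would dispose of the case $\kappa(X)\geq 0$: here Theorem~4.2 of \cite{M2} already produces a non-Zariski-dense sequence of best approximation (and on a surface a proper subvariety is a curve), so we may assume $\kappa(X)=-\infty$. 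After replacing $A$ by a positive rational multiple — which only rescales $\alpha$ and does not affect the truth of the conjecture — I would arrange that $K_X+A$ lies on the boundary of the pseudo-effective cone $\PsEff(X)$.

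\textbf{Running the MMP and locating $P$.} Next I would run the $(K_X+A)$-MMP. Since $X$ is a smooth surface and $A$ is ample, the pair is klt, so the MMP terminates: we obtain a birational morphism $\varphi\colon X\to X'$ (a composition of contractions of $K_X+A$-negative extremal $(-1)$-curves; note $X'$ is again smooth) together with a morphism $\pi\colon X'\to Z$ realizing the canonical model, with $\pi^*(\text{ample on }Z)\sim_{\QQ}\varphi_*(K_X+A)$ semiample. Here $Z$ has dimension $0$, $1$, or $2$. I would then split into two cases according to whether $P$ lies in the exceptional locus of $\varphi$.

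\emph{Case 1: $P\in\exc(\varphi)$.} Then $P$ lies on some $\varphi$-exceptional curve $E$, which is a smooth rational curve contracted by a sequence of blow-downs; its proper transform (or $E$ itself at the stage it appears) is a $(-1)$-curve. Here I would argue directly on $X$, using that $E\cong\PP^1$ is defined over $k$ (splitness makes the relevant curve classes Galois-invariant and rational, so the curve descends), and bounding $\alpha(P,A|_E)$ via the known $\PP^1$ case of the conjecture together with $A\cdot E$ being controlled — intersection with a contracted curve is small because $(K_X+A)\cdot E<0$ while $-K_X\cdot E = 1$ for a $(-1)$-curve, forcing $A\cdot E<1$, and a Roth-type/Liouville estimate on $\PP^1$ then gives $\alpha(P,A|_E)\leq 2$.

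\emph{Case 2: $\varphi(P)\notin$ singular values, i.e. $\varphi$ is a local isomorphism near $P$.} Set $P'=\varphi(P)\in X'$. Since $X'$ is smooth with $-K_{X'}$ big (as $\kappa=-\infty$ and we are at a Mori fiber space or del Pezzo end), Mori's Bend-and-Break — in the form of \cite{Ko}, Theorem~V.1.6 — produces a rational curve $C'\ni P'$ with $-K_{X'}\cdot C'\leq \dim X'+1=3$, and one can do better: unless $X'\cong\PP^2$ one gets $-K_{X'}\cdot C'\leq 2$; the case $X'\cong\PP^2$ (with $P'$ a point) reduces to the $\PP^2$ case handled in \cite{M2}. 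Because $C'$ can be taken in a contracted class (or has $(K_{X'}+\varphi_*A)\cdot C'\leq 0$ after our boundary rescaling), we get $A'\cdot C' = \varphi_*A\cdot C'\leq -K_{X'}\cdot C'\leq 2$. Taking $C$ to be the strict transform of $C'$ on $X$: since $\varphi$ is an isomorphism near $P$, $C$ passes through $P$, $C$ is rational, and $A\cdot C\leq \varphi_*A\cdot C' \leq 2$. A Roth/Liouville estimate on the normalization $\PP^1\to C$ — comparing the pullback of $A$ (degree $\leq 2$) against the distance to $P$ — then yields $\alpha(P,A|_C)\leq 2$. Finally, essential boundedness of $P$ (hypothesis of the theorem) applied to $-K_X+\epsilon A$ and let $\epsilon\to 0$ gives $\alpha_{ess}(P,A)\geq 2=\dim X$, so $\alpha(P,A|_C)\leq 2\leq \alpha_{ess}(P,A)$, as required.

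\textbf{Field-of-definition issues and the addendum.} The main obstacle is that the Bend-and-Break curve $C'$ (equivalently $C$) need not be defined over $k$ — it is defined over $\overline{k}$, and a priori only over some finite extension. Under the \emph{split} hypothesis, the class $[C']\in\Pic(X')$ is Galois-invariant, and for surfaces one can choose $C'$ within its numerical class so that the whole curve, not just its class, descends to $k$; making this precise (e.g. via rationality of the relevant Hilbert scheme component, or by exhibiting $C'$ explicitly as a fiber of $\pi$ or a line in a del Pezzo of the appropriate degree) is the technical heart of the split case. For the final "in particular" clause one simply passes to a finite extension $\ell/k$ large enough that $\Pic(X\times_k\overline k)$ is defined over $\ell$ and all the needed curves descend; then $X\times_k\ell$ is split and the first part applies verbatim to its essentially bounded points. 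I expect the descent step in Case~2, together with verifying the Roth-type estimate $\alpha(P,A|_C)\leq 2$ in the (possibly singular, possibly non-reduced-looking after specialization) strict-transform curve, to require the most care; the MMP bookkeeping and Case~1 are comparatively routine.
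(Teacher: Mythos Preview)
Your approach is essentially the same as the paper's: reduce to $\kappa<0$, rescale so $K_X+A$ sits on the boundary of $\PsEff(X)$, run the $(K_X+A)$-MMP, produce a rational curve $C$ through $P$ with $A\cdot C\leq 2$, and then invoke essential boundedness for the inequality $\alpha_{ess}(P,A)\geq 2$. The organizational difference is that the paper does not run the MMP to a terminal model $X'$ and then split into your two cases; instead it runs the MMP only until the surface $Y$ at which the \emph{next} extremal contraction contains $P'=\varphi(P)$ in its exceptional locus. This makes both cases immediate: if that next contraction is divisorial, $P'$ lies on the contracted $(-1)$-curve $C'$; otherwise it is a Mori fiber space, which forces $Y$ to have Picard rank $\leq 2$, so $Y\cong\PP^2$ (handled by a separate lemma that takes a line through $\varphi(P)$ and a point of the blown-up locus, whose strict transform has $-K_X\cdot C\leq 2$) or $Y$ is a Hirzebruch surface and $C'$ is a ruling fiber. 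In particular the paper never invokes Bend-and-Break. Your Case~2 as written has a wrinkle here: the terminal model $X'$ of the $(K_X+A)$-MMP only satisfies that $K_{X'}+A'$ is nef, and need not be a Mori fiber space nor have $-K_{X'}$ big, so \cite[Theorem~V.1.6]{Ko} does not apply as stated; the fix is to use the canonical-model map $\pi\colon X'\to Z$ you already set up (since $K_{X'}+A'$ is nef but not big, $\dim Z<2$, and a $\pi$-fiber through $P'$ gives $C'$ with $(K_{X'}+A')\cdot C'=0$ and $-K_{X'}\cdot C'\leq 2$), which then collapses to the paper's argument. Likewise, in the $X'\cong\PP^2$ subcase you cannot simply cite \cite{M2}, which treats $X=\PP^2$ rather than a blow-up of $\PP^2$; the paper's line-through-the-center trick is what is needed.
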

\begin{remark}
Conjectures \ref{conj:ratcurve} and \ref{conj:degenerate} are equivalent in the case of surfaces.
\end{remark}

The structure of the paper is as follows.  Section~\ref{sec:prelim} defines the approximation constants and gives some basic properties.  Section~\ref{sec:strategy} describes the strategy of the proof in all dimensions using the Minimal Model Program.  Section~\ref{sec:surfaces} gives the proof for surfaces.

\section{Preliminaries}\label{sec:prelim}

Throughout the rest of the paper, $k$ is a fixed number field, and $v$ a fixed place of $k$.  We further fix an extension of $v$ to $\overline{k}$, which we abusively also call $v$.  We use this place only to fix a distance function $\dist(\cdot,\cdot)$ on the rational points of $X$, and we generally suppress any further mention of it.  Also, height functions defined below are only defined up to multiplication by a positive bounded function.  These heights are only ever used to compute the value of $\alpha$, where this ambiguity is unimportant, so we will suppress precise definitions of the height functions.

We first define the basic measure of approximation on $X$.  

\begin{definition}\label{dfn:seqappconst}
	Let $X$ be a projective variety, $P\in X(\overline{k})$, $D$ a $\mathbb{Q}$-Cartier divisor on $X$.  For any sequence $\{x_i\}\subseteq X(k)$ of distinct
	points with $\dist(P,x_i)\rightarrow 0$, we set
	$$A(\{x_i\}, D) = \left\{{
		\gamma\in\R \mid
		\dist(P,x_i)^{\gamma} H_{D}(x_i)\,\,\mbox{is bounded from above}
	}\right\}.
	$$
	If $\{x_i\}$ does not converge to $x$ then we set $A(\{x_i\},D)=\emptyset$.
\end{definition}

It follows easily from the definition that if $A(\{x_i\}, L)$ is nonempty then it is an
interval unbounded to the right, i.e., if $\gamma\in A(\{x_i\},L)$ then $\gamma+\delta\in A(\{x_i\},L)$ for any $\delta>0$.

\begin{definition}
	For any sequence $\{x_i\}$ we set $\alpha(\{x_i\},P,L)$ to be the infimum of $A(\{x_i\},L)$
	(in particular if $A(\{x_i\},L)=\emptyset$ then $\alpha(\{x_i\},P,L)=\infty$).
	We call $\alpha(\{x_i\},P,L)$ the approximation constant of $\{x_i\}$ with respect to $L$.
\end{definition}

%

\begin{definition}\label{dfn:approx}
	The approximation constant $\alpha(P,L)$ of $P$ with respect to
	$L$ is defined to be the infimum of all approximation constants of
	sequences of points in $X(k)$ converging $v$-adically to $P$.
	If no such sequence exists, we set $\alpha(P,L)=\infty$.
\end{definition}

\begin{remark}\label{rem:scaling}
	Note that $\alpha$ satisfies
\[\alpha(P,aL)=a\alpha(P,L)\]
for any positive integer $a$ and so the definition extends to $\Q$-Cartier divisors.
\end{remark}

For further details on $\alpha$, we refer the reader to \cite{MR}.  We will record two useful observations here.

\begin{proposition}[{\cite[Proposition 2.14(b)]{MR}}]\label{prop:concavity}
	Let $\{x_i\}$ be any sequence of $k$-rational points converging to $P$.  Then for any positive rational numbers $a$ and $b$, and any $\Q$-Cartier divisors $L_1$ and $L_2$ (with the exception of the case that $\{\alpha(\{x_i\},P,L_1),\alpha(\{x_i\},P,L_2)\} = \{-\infty,\infty\}$) we have
	\[\alpha(\{x_i\},P,a L_1+ b L_2) \geq a\alpha(\{x_i\},P,L_1)+b\alpha(\{x_i\},P,L_2)\]
\end{proposition}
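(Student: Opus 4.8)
The plan is to reduce the inequality to two elementary facts about height functions---that $H_{D+D'}=H_D\cdot H_{D'}$ and $H_{cD}=H_D^{\,c}$ for positive rational $c$, each valid up to a factor bounded above and below by positive constants independent of the point---packaged into a single identity. Concretely, for any real numbers $\gamma_1,\gamma_2$ and any term $x_i$ of the sequence I would use
\[
\dist(P,x_i)^{\,a\gamma_1+b\gamma_2}\, H_{aL_1+bL_2}(x_i)
=\bigl(\dist(P,x_i)^{\,\gamma_1} H_{L_1}(x_i)\bigr)^{a}\bigl(\dist(P,x_i)^{\,\gamma_2} H_{L_2}(x_i)\bigr)^{b}\cdot u_i ,
\]
where $u_i=H_{aL_1+bL_2}(x_i)\big/\bigl(H_{L_1}(x_i)^{a}H_{L_2}(x_i)^{b}\bigr)$ is bounded above and below by positive constants independent of $i$ (the powers of $\dist(P,x_i)$ cancel exactly). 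After this, everything is formal manipulation of the sets $A(\{x_i\},L)$.

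The main step is then immediate from the identity: if $\gamma_j\in A(\{x_i\},L_j)$ for $j=1,2$, so that $\dist(P,x_i)^{\gamma_j}H_{L_j}(x_i)\ge c_j>0$ for all $i$, then, since $a,b>0$, the right-hand side of the identity is bounded below by $c_1^{\,a}c_2^{\,b}\,(\inf_i u_i)>0$, hence $a\gamma_1+b\gamma_2\in A(\{x_i\},aL_1+bL_2)$. Recalling that $A(\{x_i\},L)$ is an interval unbounded to the left with $\alpha(\{x_i\},P,L)=\sup A(\{x_i\},L)$, I would then pass to suprema: assuming both constants are finite, the supremum of $a\gamma_1+b\gamma_2$ over $\gamma_1\in A(\{x_i\},L_1)$ and $\gamma_2\in A(\{x_i\},L_2)$ equals $a\,\alpha(\{x_i\},P,L_1)+b\,\alpha(\{x_i\},P,L_2)$, and since $A(\{x_i\},aL_1+bL_2)$ contains all of these sums we obtain $\alpha(\{x_i\},P,aL_1+bL_2)\ge a\,\alpha(\{x_i\},P,L_1)+b\,\alpha(\{x_i\},P,L_2)$. (The same computation can be repackaged as superadditivity of $\liminf$ applied to $\alpha(\{x_i\},P,L)=\liminf_i \frac{\log H_L(x_i)}{-\log\dist(P,x_i)}$.)

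The step that actually requires care is the bookkeeping in the infinite cases, which is also precisely what forces the stated exception. If both constants are finite, the above supremum argument is immediate. If, say, $\alpha(\{x_i\},P,L_1)=+\infty$, then $A(\{x_i\},L_1)=\RR$; provided $\alpha(\{x_i\},P,L_2)>-\infty$ (the only way to avoid the excluded case), we may fix some $\gamma_2\in A(\{x_i\},L_2)$, and then the identity forces $A(\{x_i\},aL_1+bL_2)=\RR$, so both sides equal $+\infty$. If one of the two constants is $-\infty$ while the other is not $+\infty$, the right-hand side is $-\infty$ and there is nothing to prove. The single remaining possibility, $\{\alpha(\{x_i\},P,L_1),\alpha(\{x_i\},P,L_2)\}=\{-\infty,+\infty\}$, is exactly where $a\,\alpha(\{x_i\},P,L_1)+b\,\alpha(\{x_i\},P,L_2)$ would be the ill-defined expression $\infty-\infty$; this is the one case the proposition must---and does---simply exclude.
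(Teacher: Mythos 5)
There is a genuine gap, and it comes from a reversed reading of Definition~\ref{dfn:seqappconst}. In the paper, $\gamma\in A(\{x_i\},L)$ means that $\dist(P,x_i)^{\gamma}H_{L}(x_i)$ is bounded \emph{from above}; consequently $A(\{x_i\},L)$ is an interval unbounded to the \emph{right}, and $\alpha(\{x_i\},P,L)$ is its \emph{infimum} (concretely, it is a $\limsup$ of $\log H_L(x_i)/(-\log\dist(P,x_i))$, not a $\liminf$). You instead take membership in $A$ to mean bounded below by a positive constant, describe $A$ as unbounded to the left, and set $\alpha=\sup A$. For a fixed sequence these two conventions do not compute the same number---they differ whenever $\log H_L(x_i)/(-\log\dist(P,x_i))$ fails to converge---so your argument, while internally consistent, establishes superadditivity of a different quantity (the $\liminf$ version), for which termwise superadditivity is indeed automatic.

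If one runs your argument with the paper's actual definitions, it proves the \emph{reverse} inequality: if $\dist(P,x_i)^{\gamma_j}H_{L_j}(x_i)$ is bounded above for $j=1,2$, then so is the weighted product, whence $aA(\{x_i\},L_1)+bA(\{x_i\},L_2)\subseteq A(\{x_i\},aL_1+bL_2)$ and, taking infima of both sides, $\alpha(\{x_i\},P,aL_1+bL_2)\leq a\alpha_1+b\alpha_2$. The content of the proposition is the other direction: one must show that for $\gamma$ slightly below $a\alpha_1+b\alpha_2$ the quantity $\dist(P,x_i)^{\gamma}H_{aL_1+bL_2}(x_i)$ is \emph{unbounded}, knowing only that each factor $\dist(P,x_i)^{\alpha_j-\delta}H_{L_j}(x_i)$ is unbounded. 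Since a product of two individually unbounded sequences need not be unbounded, this does not follow formally from your identity; the proof in \cite{MR} (reproduced in the paper) argues by contradiction and must first pass to a subsequence along which the first factor tends to infinity before analyzing the second. That subsequence extraction, and the accompanying control of the second factor along it, is precisely the step your proposal is missing. (Your treatment of the infinite cases inherits the same reversal: with the paper's conventions, $\alpha=+\infty$ corresponds to $A=\emptyset$, not $A=\RR$.)
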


\begin{lemma}[{\cite[Lemma 4.7]{MS}}]
	\label{l:alpha-unchanged-under-pullbacks}
	Let $\pi\colon X\to Y$ be a surjective birational morphism of projective $\QQ$-factorial varieties over a number field $k$. Let $P\in X(k)$ be a point which is not in the exceptional locus $\exc(\pi)$ and let $D'$ be a $\QQ$-Cartier $\Q$-divisor on $Y$.  Suppose either that $\{x_i\}$ is a Zariski dense sequence on $X$ converging to $P$ and let $x'_i:=\pi(x_i)$, or suppose $\{x'_i\}$ is a Zariski dense sequence on $X'$ converging to $\pi(P)$ and let $x_i:=\pi^{-1}(x'_i)$ whenever $x'_i\notin\pi(\exc(\pi))$. Then $\alpha_{P,\{x_i\}}(\pi^*D')=\alpha_{\pi(P),\{x'_i\}}(D')$.
\end{lemma}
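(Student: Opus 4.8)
The plan is to argue directly from Definition \ref{dfn:seqappconst}, comparing $\dist(P, x_i)$ with $\dist(\pi(P), x'_i)$ and comparing heights $H_{\pi^*D'}(x_i)$ with $H_{D'}(x'_i)$, and then taking infima. The key point is that since $\pi$ is a birational morphism that is an isomorphism near $P$ (this is what $P \notin \exc(\pi)$ gives us, once one knows $\pi$ is an isomorphism in a neighborhood of any point not in its exceptional locus — or at least a local isomorphism onto an open neighborhood of $\pi(P)$), the distance functions agree up to bounded factors near $P$: there are constants $c_1, c_2 > 0$ and a $v$-adic neighborhood $U$ of $P$ such that $c_1 \dist(\pi(P), \pi(x)) \le \dist(P, x) \le c_2 \dist(\pi(P), \pi(x))$ for all $x \in U(k)$. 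This is the standard comparison for distance functions under an analytic isomorphism of a compact neighborhood; since the $x_i$ converge to $P$, they eventually all lie in $U$, so only finitely many terms are affected, and those do not change any $\alpha$-value.

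Next I would handle the heights. By functoriality of heights, $H_{\pi^*D'}(x) = H_{D'}(\pi(x))$ up to a bounded multiplicative factor, valid for \emph{all} points at which both sides are defined — here we use that $D'$ is $\Q$-Cartier on $Y$ and that $\pi^*D'$ makes sense since $Y$ is $\Q$-factorial (indeed $\pi$ is a morphism so no further care is needed). Combining the two comparisons: for $i$ large, $\dist(P, x_i)^\gamma H_{\pi^*D'}(x_i)$ and $\dist(\pi(P), x'_i)^\gamma H_{D'}(x'_i)$ differ by a factor bounded between two positive constants. Hence one is bounded above if and only if the other is, so $A(\{x_i\}, \pi^*D') = A(\{x'_i\}, D')$, and taking infima gives $\alpha_{P,\{x_i\}}(\pi^*D') = \alpha_{\pi(P),\{x'_i\}}(D')$. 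In the second scenario of the lemma, where one starts with $\{x'_i\}$ on $Y$ and lifts, one must first check that $x_i := \pi^{-1}(x'_i)$ is well-defined for all but finitely many $i$ and that $\{x_i\}$ converges to $P$: this follows because $\pi(\exc(\pi))$ is a proper closed subset of $Y$, the $x'_i$ eventually avoid it (using that they converge to $\pi(P) \notin \pi(\exc(\pi))$, together with the Zariski density hypothesis to discard the finitely many exceptional terms), and on the complement $\pi$ is an isomorphism so $\pi^{-1}$ is continuous there.

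The main obstacle I anticipate is the bookkeeping around the two distinct scenarios and the precise sense in which "$\pi$ is an isomorphism near $P$": one needs that $P \notin \exc(\pi)$ implies $\pi$ restricts to an isomorphism from a Zariski-open (hence $v$-adic) neighborhood of $P$ onto a Zariski-open neighborhood of $\pi(P)$ disjoint from $\pi(\exc(\pi))$ — this is true for a birational \emph{morphism} of normal varieties because the exceptional locus is exactly where $\pi$ fails to be a local isomorphism, but it should be cited or stated carefully. Everything else — the distance comparison on a compact $v$-adic neighborhood and the height functoriality — is standard, and since the lemma is quoted from \cite[Lemma 4.7]{MS}, in practice one would simply invoke that reference; I include the sketch above in case a self-contained argument is wanted. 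A minor point worth noting explicitly is that passing to or discarding a finite subset of the sequence never changes $\alpha$, which is immediate from the definition of $A(\{x_i\}, D)$ as a boundedness condition.
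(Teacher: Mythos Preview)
Your proposal is correct and follows essentially the same approach as the paper's proof: discard the finitely many terms lying in the exceptional locus (using Zariski density), compare heights via functoriality of $H_{\pi^*D'}$ and $H_{D'}$, and compare distances via the fact that $\pi$ is a local isomorphism near $P$ so that $\dist(P,\cdot)$ and $\dist(\pi(P),\pi(\cdot))$ agree up to bounded multiplicative constants on a compact $v$-adic neighborhood. The paper packages the distance comparison by citing \cite[Proposition~2.4]{MR} (applied on $X\setminus\exc(\pi)$), but the content is exactly what you describe.
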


\begin{definition}\label{dfn:essentialapproximationconstant}
	The \emph{essential approximation constant} of a variety $V$ over a number field $k$ with respect to a point $P$ and a $\Q$-Cartier divisor $L$ on $V$ is
	\[\alpha_{ess}(P,L)=\inf_{\{x_i\}\,\mbox{\tiny{Zariski dense}}}\alpha(\{x_i\},P,L)\]
	where $\{x_i\}$ ranges over all Zariski dense sequences that converge to $P$.
	In other words, it is the infimum of all the approximation constants of Zariski dense sequences.
\end{definition}

Given the above definition, the following corollary of Proposition~\ref{prop:concavity} is immediate.

\begin{corollary}\label{cor:concavity}
	For any $\Q$-Cartier divisors $L_1$ and $L_2$ and any rational numbers $a$ and $b$, we have
	\[\alpha(P,a L_1+ b L_2) \geq a\alpha(P,L_1)+b\alpha(P,L_2)\]
	unless $\{\alpha(P,L_1),\alpha(P,L_2)\}=\{-\infty,\infty\}$, and
	\[\alpha_{ess}(P,a L_1+ b L_2) \geq a\alpha_{ess}(P,L_1)+b\alpha_{ess}(P,L_2)\]
	unless $\{\alpha_{ess}(P,L_1),\alpha_{ess}(P,L_2)\}=\{-\infty,\infty\}$.
\end{corollary}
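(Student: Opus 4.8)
The plan is to deduce both inequalities directly from Proposition~\ref{prop:concavity} by working one approximating sequence at a time and then passing to the infimum. Recall that by Definition~\ref{dfn:approx} we have $\alpha(P,L)=\inf_{\{x_i\}}\alpha(\{x_i\},P,L)$, the infimum being taken over all sequences of $k$-rational points converging to $P$, while by Definition~\ref{dfn:essentialapproximationconstant} the quantity $\alpha_{ess}(P,L)$ is the same infimum restricted to Zariski dense sequences. In particular $\alpha(\{x_i\},P,L)\geq\alpha(P,L)$ for every sequence $\{x_i\}$, and $\alpha(\{x_i\},P,L)\geq\alpha_{ess}(P,L)$ whenever $\{x_i\}$ is moreover Zariski dense. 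I will treat the case of positive rational $a$ and $b$, which is the form provided by Proposition~\ref{prop:concavity} and the one used in the sequel.

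First I would establish the inequality for $\alpha$. Fix a sequence $\{x_i\}$ of $k$-rational points converging to $P$. If $\{\alpha(\{x_i\},P,L_1),\alpha(\{x_i\},P,L_2)\}\neq\{-\infty,\infty\}$, then Proposition~\ref{prop:concavity} together with $\alpha(\{x_i\},P,L_j)\geq\alpha(P,L_j)$ and $a,b>0$ gives
\[
\alpha(\{x_i\},P,aL_1+bL_2)\;\geq\;a\,\alpha(\{x_i\},P,L_1)+b\,\alpha(\{x_i\},P,L_2)\;\geq\;a\,\alpha(P,L_1)+b\,\alpha(P,L_2).
\]
If instead $\{\alpha(\{x_i\},P,L_1),\alpha(\{x_i\},P,L_2)\}=\{-\infty,\infty\}$, say $\alpha(\{x_i\},P,L_1)=-\infty$, then $\alpha(P,L_1)=-\infty$; since we have excluded the case $\{\alpha(P,L_1),\alpha(P,L_2)\}=\{-\infty,\infty\}$, the quantity $a\,\alpha(P,L_1)+b\,\alpha(P,L_2)$ is $-\infty$ and the same inequality holds trivially. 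Hence the displayed bound holds for every sequence $\{x_i\}$, and taking the infimum over all sequences converging to $P$ gives $\alpha(P,aL_1+bL_2)\geq a\,\alpha(P,L_1)+b\,\alpha(P,L_2)$.

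The inequality for $\alpha_{ess}$ follows by the verbatim argument applied only to Zariski dense sequences $\{x_i\}$, using $\alpha(\{x_i\},P,L_j)\geq\alpha_{ess}(P,L_j)$ and taking the infimum over Zariski dense sequences at the end. I do not anticipate a genuine obstacle here: the mathematical content lies entirely in Proposition~\ref{prop:concavity}, and the only delicate point is the bookkeeping with $\pm\infty$, namely verifying that a sequence lands in the per-sequence exceptional set $\{-\infty,\infty\}$ only when the corresponding global constants already form the exceptional set, so that the hypotheses of the corollary precisely rule out the indeterminate expression $a\cdot(-\infty)+b\cdot(+\infty)$.
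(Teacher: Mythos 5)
Your argument is correct and is exactly the "immediate" deduction the paper intends: apply Proposition~\ref{prop:concavity} sequence by sequence, bound each term below by the corresponding infimum, and then take the infimum over all (resp.\ all Zariski dense) sequences, with the careful $\pm\infty$ bookkeeping you supply. Your restriction to positive rational $a,b$ is the right reading of the statement, since that is the only case Proposition~\ref{prop:concavity} covers and the only case used later in the paper.
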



As stated in Proposition \ref{prop:Vojta->ess-boudned}, Vojta's conjectures imply that every $k$-rational point $P$ is essentially bounded.  We will prove this result after first recalling for the reader's convenience the statement of Conjecture~\ref{conj:vstrong} from \cite{Vo}.  We refer the reader to \cite{Vo} for the definition of the various quantities used in the statement of the conjecture.

\begin{conjecture}[Vojta's Main Conjecture]\label{conj:vstrong}
	Let $X$ be a smooth, projective variety defined over a number field $k$, with canonical divisor $K_X$.  Let $S$ be a finite set of places of $k$.  Let $A$ be a big divisor on $X$, and let $D$ be a normal crossings divisor on $X$.  Choose height functions $h_{K_X}$ and $h_A$ for $K_X$ and $A$, respectively, and define a proximity function $m_S(D,P) = \sum_{v\in S} h_{D,v}(P)$ for $D$ with respect to $S$, where $h_{D,v}$ is a local height function for $D$ at $v$.  Choose any $\epsilon>0$.  Then there exists a nonempty Zariski open set $U=U(\epsilon)\subseteq X$ such that for every $k$-rational point $Q\in U(k)$, we have the following inequality:
	\begin{equation}\label{vojta-ineq}
		m_S(D,Q) + h_{K_X}(Q) \leq \epsilon h_A(Q) + B
	\end{equation}
where $B$ is a constant that does not depend on $Q$.
\end{conjecture}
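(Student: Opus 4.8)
Strictly speaking there is nothing here for us to prove: Conjecture~\ref{conj:vstrong} is Vojta's Main Conjecture, one of the central open problems of Diophantine geometry, and we have recalled it only because it is the hypothesis underlying Proposition~\ref{prop:Vojta->ess-boudned} and Corollary~\ref{thm:generalsurface-Vojta-cor}. So in place of a proof plan I will indicate what is known and why any attempted proof runs aground.

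The conjecture is a theorem in several important cases. When $\dim X = 1$ it amounts to the Roth--Thue--Siegel theorem together with Faltings' theorem: for a curve of genus $\geq 2$ one simply has $X(k)$ finite; for genus $1$ it follows from the theory of heights on elliptic curves; and for genus $0$ it reduces, after identifying local height functions with logarithmic distances, to Roth's theorem. More substantially, Vojta has proved the conjecture for subvarieties of semiabelian varieties with $D$ trivial, building on the work of Faltings and himself on rational and integral points of subvarieties of abelian varieties. The special case $D = \emptyset$ with $K_X$ big is already the Bombieri--Lang conjecture, which remains wide open.

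If one were to attempt the general statement, the natural plan would be to run the arithmetic analogue of Nevanlinna's Second Main Theorem, in the style of Vojta's proof of the Mordell conjecture and of Faltings' product theorem. One would assume for contradiction a Zariski-dense sequence of $k$-rational points violating \eqref{vojta-ineq} for some fixed $\epsilon > 0$; pass to a high self-power $X^n$; construct by a Siegel-lemma argument an auxiliary global section of a carefully chosen line bundle vanishing to prescribed high order along the multidiagonal-type loci determined by the putative points; bound its height from above via arithmetic Riemann--Roch or Hilbert-function estimates; bound the index of vanishing from below by a zero-estimate of Roth's-lemma type; and play the two bounds against one another to contradict the supposed quality of approximation.

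The hard part, and the reason the conjecture is still open, is that this machinery has only ever been made to function when the geometry of $X$ is rigidly structured. For subvarieties of abelian varieties one has translations, the theta divisor, and an induction on dimension with which to manufacture both the auxiliary sections and the zero-estimate; for $\PP^n$ one has the explicit combinatorics of monomials that underlies Roth's lemma and the subspace theorem. For a general variety of general type none of this is available: there is no canonical supply of auxiliary divisors on $X^n$, no zero-estimate of the required strength, and no effective Bogomolov-type inequality with which to absorb the error term $\epsilon h_A(Q)$. We therefore make no attempt at a proof, and Conjecture~\ref{conj:vstrong} enters our arguments purely as an input.
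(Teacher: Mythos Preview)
Your proposal is correct: Conjecture~\ref{conj:vstrong} is stated in the paper purely as a hypothesis (recalled from \cite{Vo}) and is given no proof there either, so there is nothing to compare. The paper, like you, treats it solely as input to the proof of Proposition~\ref{prop:Vojta->ess-boudned}.
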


We are now ready to prove Proposition \ref{prop:Vojta->ess-boudned}.



\begin{proof}[{Proof of Proposition \ref{prop:Vojta->ess-boudned}}]
Choose any rational $\epsilon>0$ and any ample divisor $A$ on $X$.  Let $D$ be a normal crossings divisor defined over $k$ with multiplicity $\dim X$ at $P$.  (For example, $D$ could be the union of $\dim X$ transversely intersecting smooth divisors containing $P$ in the class of $mA$ for some large $m$.)  Let $U(\epsilon)$ be chosen as in Conjecture \ref{conj:vstrong} and suppose $Q \in U(\epsilon)$ is a $k$-rational point.  Due to our choice of $D$, for some additive constants $M$ and $M'$ not depending on $Q$ we have:
\[m_S(D,Q)=-\log\dist(D,Q)+M\geq -\log\dist(P,Q)^{\dim X}+M'\]

Plugging this into inequality \ref{vojta-ineq}, exponentiating, and rearranging, we find that for some constant $N$ not depending on $Q$:
\[\dist(P,Q)^{\dim X}H_{-K_X+\epsilon A}(Q)\geq N.\]

This means that for every sequence $\{x_i\}$ of $k$-rational points in $U(\epsilon)$ with $\dist(P,x_i)\to 0$, and for every $\gamma<\dim X$, the quantity
\[\dist(P,x_i)^\gamma H_{-K_X+\epsilon A}(x_i)\]
goes to infinity as $i\to\infty$.  Therefore for any sequence in $U(\epsilon)$, we have
\[\alpha(\{x_i\},P,-K_X+\epsilon A)\geq\dim X\]

Since any Zariski dense sequence must intersect $U(\epsilon)$ in a cofinal subsequence, this immediately implies that 
\[\alpha_{ess}(P,-K_X+\epsilon A)=\inf_{\{x_i\}\,\mbox{\tiny{Zariski dense}}}\alpha(\{x_i\},P,L)\geq\dim X\]
as desired.
\end{proof}

\section{General Results}\label{sec:strategy}

In this section, we describe the strategy for proving Conjecture \ref{conj:degenerate} in full generality.  Our first step is to notice that we can rescale our ample divisor $A$.

\begin{proposition}\label{prop:scaling}
	Let $A$ be an ample $\Q$-Cartier divisor and $y$ a positive rational number.  If Conjecture \ref{conj:ratcurve} is true for $A$, then it is true for $yA$.
\end{proposition}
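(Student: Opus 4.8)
The plan is to reduce the statement to the homogeneity of $\alpha$ recorded in Remark~\ref{rem:scaling}. Write $y = p/q$ with $p,q$ positive integers. For any projective variety $V$, any point $P\in V(k)$, and any $\Q$-Cartier divisor $L$ on $V$, Remark~\ref{rem:scaling} gives the identity $\alpha(P, yL) = y\,\alpha(P,L)$: indeed, applying the integral scaling relation to $q\cdot(yL) = pL$ yields $q\,\alpha(P,yL) = p\,\alpha(P,L)$, and dividing by $q$ gives the claim (with the usual conventions if $\alpha = \infty$). I would invoke this twice: once with $L = A$ on $X$, and once with $L = i^*A = A|_C$ on a curve $i\colon C\hookrightarrow X$ passing through $P$, noting that $(yA)|_C = i^*(yA) = y\,i^*A = y(A|_C)$ since pullback of divisors is linear.

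With this in hand, suppose Conjecture~\ref{conj:ratcurve} holds for $A$, and let $P\in X(k)$ be a point with $\alpha(P, yA) < \infty$. By the homogeneity identity on $X$, $\alpha(P, A) = y^{-1}\alpha(P, yA) < \infty$, so the hypothesis of Conjecture~\ref{conj:ratcurve} for $A$ is satisfied at $P$. Hence there is a curve $C$ through $P$ with $\alpha(P, A|_C) = \alpha(P,A)$. Applying the homogeneity identity first on $C$ and then on $X$,
\[
\alpha\big(P, (yA)|_C\big) = \alpha\big(P, y(A|_C)\big) = y\,\alpha(P, A|_C) = y\,\alpha(P,A) = \alpha(P, yA),
\]
so $C$ is a curve of best $yA$-approximation to $P$. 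As $P$ was an arbitrary point with $\alpha(P, yA) < \infty$, this proves Conjecture~\ref{conj:ratcurve} for $yA$.

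I do not expect a genuine obstacle here: the entire content is the scaling behaviour of $\alpha$, and the only minor point worth stating carefully is the interaction of scaling with restriction to $C$ (handled by linearity of pullback and the already-recorded integral scaling law). In fact the same argument run in both directions shows that Conjecture~\ref{conj:ratcurve} for $A$ is \emph{equivalent} to Conjecture~\ref{conj:ratcurve} for any positive rational multiple of $A$, which is the form in which this reduction will be used in Section~\ref{sec:surfaces}.
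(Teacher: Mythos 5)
Your proof is correct and follows the same route as the paper, which simply invokes the homogeneity $\alpha(\{x_i\},P,yA)=y\,\alpha(\{x_i\},P,A)$ for every sequence; you have merely spelled out the details (reduction of rational to integral scaling and compatibility of scaling with restriction to $C$) that the paper leaves implicit.
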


\begin{proof}
	This follows immediately from the fact that $\alpha(\{x_i\},P,yA)=y\alpha(\{x_i\},P,A)$ for any sequence $\{x_i\}$.
\end{proof}

\begin{proposition}\label{prop:stepone}
	Let $X$ be a smooth, projective variety defined over a number field $k$, $P$ a $k$-rational point of $X$, $A$ an ample $\Q$-Cartier divisor on $X$ such that $K_X+A$ is in the closure of the effective cone.  Then if $P$ is essentially bounded, we have
	\[\alpha_{ess}(P,A)\geq\dim X\]
\end{proposition}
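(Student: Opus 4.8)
The plan is to express $A$ as a positive rational combination of a divisor to which essential boundedness applies and a big divisor, then invoke the concavity of $\alpha_{ess}$ recorded in Corollary~\ref{cor:concavity}. Concretely, for any rational $\epsilon>0$ one has the identity of $\Q$-Cartier divisors
\[
A \;=\; \frac{1}{1+2\epsilon}\bigl(-K_X+\epsilon A\bigr)\;+\;\frac{1}{1+2\epsilon}\bigl(K_X+(1+\epsilon)A\bigr),
\]
since the two divisors in parentheses sum to $(1+2\epsilon)A$. Applying Corollary~\ref{cor:concavity} with $a=b=\tfrac{1}{1+2\epsilon}$ gives
\[
\alpha_{ess}(P,A)\;\geq\;\frac{1}{1+2\epsilon}\,\alpha_{ess}\!\bigl(P,-K_X+\epsilon A\bigr)\;+\;\frac{1}{1+2\epsilon}\,\alpha_{ess}\!\bigl(P,K_X+(1+\epsilon)A\bigr),
\]
and the exceptional case of the corollary does not arise, because both terms on the right will be shown to be $\geq 0$, hence neither equals $-\infty$.

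For the first term, essential boundedness of $P$ (Definition~\ref{dfn:essentiallybounded}), applied to the ample divisor $A$ and the rational number $\epsilon$, gives directly $\alpha_{ess}(P,-K_X+\epsilon A)\geq\dim X$. For the second term, $K_X+(1+\epsilon)A=(K_X+A)+\epsilon A$ is the sum of a pseudo-effective divisor (by hypothesis $K_X+A$ lies in the closure of the effective cone) and an ample divisor, hence is big; therefore some positive integer multiple of it is linearly equivalent to an effective divisor $E$, so $\alpha_{ess}(P,K_X+(1+\epsilon)A)=\tfrac1m\alpha_{ess}(P,E)$. Since heights attached to an effective divisor are bounded below by a positive constant away from $\Supp(E)$ (and are $+\infty$ on $\Supp(E)$), we get $\alpha(\{x_i\},P,E)\geq 0$ for every sequence $\{x_i\}$ converging to $P$, and hence $\alpha_{ess}(P,E)\geq 0$. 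Combining the two estimates yields $\alpha_{ess}(P,A)\geq \frac{\dim X}{1+2\epsilon}$ for every rational $\epsilon>0$; letting $\epsilon\to 0^+$ gives $\alpha_{ess}(P,A)\geq\dim X$.

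I expect no genuine obstacle here: this is the ``easy half'' of the overall strategy, with the real difficulty lying in the later step of producing the curve $C$. The only mild subtlety, and the reason for the parameter $\epsilon$, is that $K_X+A$ is merely assumed pseudo-effective rather than effective, so one cannot directly conclude $\alpha_{ess}(P,K_X+A)\geq 0$; perturbing by the ample divisor $\epsilon A$ makes it big and hence $\Q$-effective, at the negligible cost of replacing $\dim X$ by $\frac{\dim X}{1+2\epsilon}$, which is recovered in the limit.
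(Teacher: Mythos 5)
Your proof is correct and follows essentially the same route as the paper's: both hinge on applying essential boundedness to $-K_X+\epsilon A$, using effectivity of a divisor close to $K_X+A$ to supply a nonnegative contribution, and letting $\epsilon\to 0^+$. The only real difference is that the paper invokes \cite{BCHM} to make $K_X+A$ itself $\QQ$-effective and then compares heights directly, whereas you perturb to the big divisor $K_X+(1+\epsilon)A$ (so Kodaira's lemma suffices) and combine the two estimates via Corollary~\ref{cor:concavity}; both are fine.
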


\begin{proof}
	We first note that by Theorem 1.1 of \cite{BCHM}, some multiple of $K_X+A$ is effective.  Applying for example Theorem B.3.2.(e) of \cite{HS}, we conclude there is a constant $N>0$ such that for all $k$-rational $Q$ not in the support of $K_X+A$, we have
	\[H_{K_X+A}(Q)\geq N\]
	In particular, for every $\epsilon>0$, we get
	\[H_{(1+\epsilon)A}(Q)\geq NH_{-K_X+\epsilon A}(Q)\]
	and therefore
	\[\alpha_{ess}(P,(1+\epsilon)A)\geq\alpha_{ess}(P,-K_X+\epsilon A)\geq\dim X\]
	by essential boundedness.  Since $\alpha_{ess}(P,(1+\epsilon)A)=(1+\epsilon)\alpha_{ess}(P,A)$, as $\epsilon\to 0$ we deduce the proposition.
\end{proof}


It remains to find a curve that contains a sequence of approximation whose $\alpha$-value is no larger than $\dim X$.  To do this, we need to be able to calculate $\alpha$ on a curve. For this we make use of the following result.

\begin{theorem}[{\cite[Theorem 2.16]{MR}}]
	\label{thm:curve}
	Let $C$ be any rational curve defined over $k$ and $\varphi\colon\PP^1\rightarrow C$ the normalization map.
	Then for any nef line bundle $L$ on $C$, and any $x\in C(\overline{k})$ we have the equality:
	\[\alpha_{x,C}(L)=\min_{q\in \varphi^{-1}(x)} d/r_{q} m_{q}\]
	where $d=\deg(L)$, $m_{q}$ is the multiplicity of the branch of $C$ through $x$ corresponding to $q$, and
	\[r_{q}=
	\begin{cases}
		0 & \text{if $\kappa(q)\not\subseteq k_v$} \\
		1 &\text{if $\kappa(q)=k$} \\
		2 &\text{otherwise.}
	\end{cases}
	\]
\end{theorem}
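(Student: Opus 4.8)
The final statement to prove is Theorem~\ref{thm:curve}, the computation of $\alpha_{x,C}(L)$ for a rational curve $C/k$ with normalization $\varphi\colon\PP^1\to C$. I note that this is quoted from \cite[Theorem 2.16]{MR}, so the "proof" I should sketch is really a reconstruction of the argument underlying that result.

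My plan is as follows. First I would reduce to the case $C=\PP^1$ and then to the case $L=\O(1)$. The reduction to $\PP^1$ works because $\varphi$ is finite and birational, so distance and height functions transfer: a sequence $\{x_i\}\subset C(k)$ converging $v$-adically to $x$ pulls back (away from the finitely many singular points and possibly after passing to a subsequence clustering at one branch $q\in\varphi^{-1}(x)$) to a sequence $\{\tilde x_i\}\subset\PP^1(k)$ converging to $q$, with $H_L(x_i)=H_{\varphi^*L}(\tilde x_i)$ up to bounded factors and $\dist_C(x,x_i)\asymp\dist_{\PP^1}(q,\tilde x_i)^{m_q}$, the exponent $m_q$ being exactly the ramification of the branch (this is the local analytic statement $u\mapsto u^{m_q}$ for a branch parametrization). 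Taking the infimum over which branch $q$ the subsequence concentrates on produces the $\min_{q}$ in the formula and replaces the exponent $m_q$. Since $\varphi^*L$ is a nef (hence semiample, hence a nonnegative multiple of $\O(1)$) line bundle on $\PP^1$ of degree $d$, and $\alpha$ scales linearly in $L$ (Remark~\ref{rem:scaling}), it remains to compute $\alpha_{q,\PP^1}(\O(1))$ and show it equals $1/r_q$.

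The core computation is therefore: for a point $q\in\PP^1(\overline k)$ with residue field $\kappa(q)$, determine the best approximation constant to $q$ by $k$-rational points of $\PP^1$ with respect to $\O(1)$. I would split into the three cases. If $\kappa(q)\not\subseteq k_v$, then $q$ does not lie in the $v$-adic closure of $\PP^1(k)$ — more precisely no sequence of $k$-points can converge $v$-adically to $q$ — so $A(\{x_i\},\O(1))=\emptyset$ for every candidate sequence and $\alpha=\infty=1/0$. If $\kappa(q)=k$, i.e.\ $q$ is a $k$-rational point, this is the classical statement that $\tau=2$ is the correct exponent: Dirichlet/Liouville gives a sequence with $\dist(q,x_i)\asymp H(x_i)^{-2}$ hence $\alpha\le 1$, and Roth's theorem (here trivial, or rather the pigeonhole lower bound $\dist(q,x_i)\gg H(x_i)^{-2}$ valid for rational $q$) gives $\alpha\ge 1$; so $\alpha=1=1/1$. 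Hmm, let me reconsider: for $q$ rational the lower bound $\dist(q,x_i)\geq c\,H(x_i)^{-2}H(q)^{-2}$ is an elementary height inequality (difference of two distinct rationals of bounded height can't be too small), giving $\alpha\geq 1$; for $q$ quadratic the same elementary argument on the quadratic field gives $\dist(q,x_i)\gg H(x_i)^{-1}$ hence $\alpha\geq 1/2$, while Liouville's theorem for a quadratic irrationality is sharp (continued fractions, or Dirichlet) giving $\alpha\leq 1/2$; for $q$ of degree $\geq 3$ over the relevant completion one still gets $r_q=2$ and the upper bound $\alpha\leq 1/2$ from Dirichlet's box principle in the completion $k_v$, and the lower bound $\alpha\geq 1/2$ is the content of the (locally effective) Roth theorem of McKinnon--Roth applied at the single place $v$. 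So in the "otherwise" case $r_q=2$ and $\alpha_{q,\PP^1}(\O(1))=1/2$, and the Roth-type lower bound is what is genuinely being invoked.

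The main obstacle, and the one place where real input enters, is the lower bound $\alpha\ge 1/2$ in the third case: that no sequence of $k$-rational points approximates a higher-degree algebraic point better than exponent $2$ (in the $\alpha$-normalization, $1/2$). This is precisely Roth's theorem (in the number-field, single-place formulation), which is deep; everything else is bookkeeping. The second genuinely nontrivial point is the transfer of distances under $\varphi$ with the correct multiplicity exponent $m_q$ — I would handle this by choosing a local analytic (or formal/\'etale) coordinate at $q$ in which $\varphi$ looks like $u\mapsto u^{m_q}$, then comparing the induced $v$-adic metrics, citing \cite[Proposition 2.4]{MR} (or its local analogue as in Lemma~\ref{l:alpha-unchanged-under-pullbacks}) for the fact that two distance functions for the same point differ by bounded multiplicative factors. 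Finally I would assemble: $\alpha_{x,C}(L)=\min_q \alpha_{q,\PP^1}(\varphi^*L)^{\text{adjusted by }m_q}=\min_q d/(r_q m_q)$, matching the claimed formula. Since this is exactly \cite[Theorem 2.16]{MR}, in the paper itself I would simply cite that reference rather than reproduce the argument; the sketch above records why the formula takes this shape.
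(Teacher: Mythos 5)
The paper does not actually prove this statement: it is imported verbatim from \cite[Theorem 2.16]{MR}, so there is no internal proof to compare against, and your decision to cite rather than reprove is exactly what the authors do. That said, your reconstruction of the underlying argument is sound in outline and matches the structure of the proof in \cite{MR}: reduce to $\PP^1$ via the normalization, transporting heights unchanged and the $v$-adic distance with exponent $m_q$ along the branch at $q$; take the minimum over the branches on which a subsequence of approximating points can cluster; and then compute $\alpha_q(\O(1))$ on $\PP^1$ in the three cases, where the only deep input is the Roth-type lower bound $\alpha\geq 1/2$ for points of degree $\geq 2$ whose residue field embeds in $k_v$, the upper bound in that case coming from a Dirichlet/pigeonhole construction in $k_v$.

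One slip worth flagging in the $\kappa(q)=k$ case: the elementary gap principle for a fixed rational point $q$ is $\dist(q,x_i)\geq c(q)\,H(x_i)^{-1}$ (the difference of two distinct fractions is at least the reciprocal of the product of the denominators, one of which is fixed), not $\dist(q,x_i)\geq c\,H(x_i)^{-2}H(q)^{-2}$ as you wrote. The inequality as you stated it would only give $\alpha\geq 1/2$, whereas the correct bound, linear in $H(x_i)$, is what yields $\alpha\geq 1$, matching $d/(r_qm_q)=1$. This does not affect the validity of the overall sketch, since the conclusion is classical, but as written that step does not prove what you claim it proves.
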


For the purposes of this paper, the relevant consequence of this theorem is the following corollary.

\begin{corollary}\label{cor:curve}
	Let $X$ be a projective variety defined over a number field $k$, and let $C$ be a rational curve on $X$, also defined over $k$.  Let $A$ be a $\Q$-Cartier divisor on $X$ such that $A \cdot C \geq 0$.  For any $k$-rational point $P\in C(k)$ that is a $v$-adic limit of other $k$-rational points on $C$, we have
	\[\alpha(P,A|_C)=MA\cdot C\]
	where $M$ is a rational number independent of $A$ with $0\leq M\leq 1$.
\end{corollary}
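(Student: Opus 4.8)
The plan is to deduce this directly from Theorem \ref{thm:curve}. First I would let $\varphi\colon \PP^1\to C$ be the normalization map (which exists over $k$ since $C$ is a rational curve defined over $k$), and apply Theorem \ref{thm:curve} to the nef line bundle $L=A|_C$ on $C$; note $L$ is nef precisely because $A\cdot C\geq 0$ and $C$ is a curve, so degree-nonnegativity is the same as nefness. The theorem then gives
\[
\alpha(P,A|_C)=\alpha_{P,C}(A|_C)=\min_{q\in\varphi^{-1}(P)} \frac{d}{r_q m_q},
\]
where $d=\deg(A|_C)=A\cdot C$, and $m_q,r_q$ are as in the statement. The key observation is that $d=A\cdot C$ is the only part of this expression that depends on $A$: the multiplicities $m_q$ of the branches of $C$ at $P$ and the residue-field invariants $r_q\in\{0,1,2\}$ depend only on the pointed curve $(C,P)$ and the place $v$, not on $A$. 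Hence setting
\[
M:=\min_{q\in\varphi^{-1}(P)}\frac{1}{r_q m_q}
\]
gives a rational number independent of $A$ with $\alpha(P,A|_C)=M\,(A\cdot C)$.

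Next I would verify the bounds $0\le M\le 1$. For the upper bound: since $P\in C(k)$ is a $k$-rational point, there is at least one branch $q\in\varphi^{-1}(P)$ with $\kappa(q)=k$, hence $r_q=1$; and every branch has $m_q\ge 1$; so the minimum defining $M$ is at most $1/(1\cdot 1)=1$. For the lower bound $M\ge 0$: each term $1/(r_q m_q)$ is either a positive rational (when $r_q\in\{1,2\}$) or, when $r_q=0$, should be interpreted as $+\infty$ — so it never contributes a negative or problematic value to the minimum, and $M>0$ in fact, hence certainly $M\ge 0$. I would also remark that the hypothesis that $P$ is a $v$-adic limit of other $k$-rational points of $C$ guarantees that $\alpha(P,A|_C)$ is genuinely computed by some approximating sequence lying on $C$ (so the quantity is finite and Theorem \ref{thm:curve} applies as stated); without it the left-hand side could be $\infty$.

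The only mild subtlety — and the one step I would be most careful about — is the bookkeeping around $r_q=0$ and the normalization of $C$ being defined over $k$. If $C$ is singular at $P$, there may be several points $q$ in the fibre, and one must make sure the branch with $r_q=1$ indeed exists (it does, because $P$ is $k$-rational, so the reduced fibre $\varphi^{-1}(P)$ contains a $k$-point) and that the $q$ with $\kappa(q)\not\subseteq k_v$ are harmlessly dropped from the minimum. None of this is deep: it is a direct unwinding of Theorem \ref{thm:curve}, and the content of the corollary is essentially the extraction of the $A$-independent factor $M$ together with the elementary estimate $0\le M\le 1$.
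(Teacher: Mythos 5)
Your proof is correct and follows essentially the same route as the paper's: apply Theorem \ref{thm:curve}, extract $M=\min_q 1/(r_qm_q)$ as the $A$-independent factor, and use the $v$-adic limit hypothesis to rule out the case $r_q=0$ for all $q$. One sub-claim in your justification of $M\le 1$ is false, though it does not affect the conclusion: a $k$-rational point $P$ of $C$ need \emph{not} have a branch $q$ with $\kappa(q)=k$ (e.g.\ a $k$-rational node whose two branches are conjugate over a quadratic extension has both preimages in the normalization with residue field a quadratic extension of $k$). The bound $M\le 1$ nevertheless holds because the limit hypothesis guarantees some $q$ with $r_q\in\{1,2\}$, and any such $q$ gives $1/(r_qm_q)\le 1$; this is the argument the paper uses, and you should replace your residue-field claim with it.
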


\begin{proof}
	We apply Theorem~\ref{thm:curve} with $d=A.C$.  Thus $\alpha(P,A|_C)=\min_q\{(A.C)/r_qm_q\}\leq A.C$ unless $r_q=0$ for all $q$.  Since $r_q=0$ for all $q$ if and only if $P$ is $v$-adically isolated on $C$, the corollary follows with $M=\min_q\{1/r_qm_q\}$. 
\end{proof}

\begin{remark}
	Note that if the curve $C$ is smooth at $P$, then $M=1$.
\end{remark}

The following is a variant of \cite[Lemma 4.3]{MS}. 

\begin{lemma}
	\label{l:reduce-to-a=0}
	Let $X$ be a projective, $\QQ$-factorial algebraic variety over a number field $k$ which is essentially bounded at $P\in X(k)$. Let $A$ be an ample $\Q$-divisor on $X$ such that $K_X+A$ is nef. Suppose $C$ is an irreducible curve through $P$ with $-K_X\cdot C\leq\dim X$ and
	\[\alpha(P,(K_X+A)|_C)\leq \alpha_{ess}(P,K_X+A) < \infty.\] 
	Then we have 
	\[\alpha(P,A|_C)\leq \alpha_{ess}(P,A)\]
\end{lemma}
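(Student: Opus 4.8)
The plan is to split $A$ as a positive combination of $K_X+A$ (which is nef and on which we have hypotheses) and $-K_X$ (which has small intersection with $C$), and then run the concavity estimate of Corollary~\ref{cor:concavity} against $C$. Write
\[
A = (K_X+A) + (-K_X),
\]
an identity of $\Q$-Cartier divisor classes. The point is that both summands are nef on $C$: the first because $K_X+A$ is nef by hypothesis, and the second because $-K_X\cdot C\leq\dim X$ forces $-K_X\cdot C\geq 0$ once we know $C$ is not contained in the stable base locus — but in fact we only need $-K_X\cdot C\geq 0$, which we will have to extract. Actually the cleaner route is: we may assume $-K_X\cdot C\geq 0$, since otherwise $A\cdot C=(K_X+A)\cdot C-K_X\cdot C$ combined with $(K_X+A)\cdot C\geq 0$ would give $A\cdot C$ potentially negative, contradicting ampleness of $A$; so $-K_X\cdot C\geq 0$ is automatic.

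First I would apply Corollary~\ref{cor:curve} to the curve $C$ (after passing to its normalization, noting $C$ is rational — wait, $C$ need only be irreducible here, so instead I use Theorem~\ref{thm:curve} only if $C$ is rational; if $C$ is not rational one argues directly, but in the application $C$ will be rational, so I will invoke Corollary~\ref{cor:curve}). This gives a single rational constant $M$ with $0\leq M\leq 1$, \emph{independent of the divisor}, such that $\alpha(P,D|_C)=M(D\cdot C)$ for every nef $\Q$-Cartier $D$ with $D\cdot C\geq 0$. Applying this to $D=A$, $D=K_X+A$, and $D=-K_X$ respectively, we get
\[
\alpha(P,A|_C)=M(A\cdot C),\qquad \alpha(P,(K_X+A)|_C)=M\bigl((K_X+A)\cdot C\bigr),\qquad \alpha(P,(-K_X)|_C)=M(-K_X\cdot C).
\]
Hence $\alpha(P,A|_C)=\alpha(P,(K_X+A)|_C)+\alpha(P,(-K_X)|_C)=\alpha(P,(K_X+A)|_C)+M(-K_X\cdot C)\leq \alpha(P,(K_X+A)|_C)+M\dim X\leq \alpha(P,(K_X+A)|_C)+\dim X$, using $M\leq 1$ and $-K_X\cdot C\leq\dim X$.

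Next I would bound $\alpha(P,(K_X+A)|_C)$ using the hypothesis $\alpha(P,(K_X+A)|_C)\leq\alpha_{ess}(P,K_X+A)$, together with the concavity estimate of Corollary~\ref{cor:concavity} applied to the decomposition $A=(K_X+A)+(-K_X)$:
\[
\alpha_{ess}(P,A)\geq \alpha_{ess}(P,K_X+A)+\alpha_{ess}(P,-K_X).
\]
Since $X$ is essentially bounded at $P$, letting $\epsilon\to 0$ in $\alpha_{ess}(P,-K_X+\epsilon A)\geq\dim X$ and using continuity/monotonicity of $\alpha_{ess}$ in the divisor (via concavity, as in the proof of Proposition~\ref{prop:stepone}) gives $\alpha_{ess}(P,-K_X)\geq\dim X$. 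Combining,
\[
\alpha_{ess}(P,A)\geq \alpha_{ess}(P,K_X+A)+\dim X \geq \alpha(P,(K_X+A)|_C)+\dim X \geq \alpha(P,A|_C),
\]
where the last inequality is the bound from the previous paragraph. This is exactly the desired conclusion.

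**Main obstacle.** The delicate point is ensuring that the constant $M$ from Corollary~\ref{cor:curve} really is the \emph{same} for all three divisors $A$, $K_X+A$, $-K_X$; this is precisely the content of "$M$ independent of $A$" in that corollary, so it is safe, but one must check each divisor is nef on $C$ and has nonnegative intersection with $C$ — in particular that $-K_X\cdot C\geq 0$, which as noted follows from $A\cdot C>0$ and $(K_X+A)\cdot C\geq 0$. A secondary subtlety is the passage $\alpha_{ess}(P,-K_X)\geq\dim X$ from the definition of essential boundedness (which only asserts $\alpha_{ess}(P,-K_X+\epsilon A)\geq\dim X$ for $\epsilon>0$): this limiting argument is the same one already carried out in Proposition~\ref{prop:stepone}, relying on $\alpha_{ess}(P,(1+\epsilon)A')=(1+\epsilon)\alpha_{ess}(P,A')$ and concavity, so it presents no real difficulty. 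If $C$ happens not to be rational, Corollary~\ref{cor:curve} does not apply verbatim, but in that case one can argue more crudely that $\alpha(P,A|_C)\leq A\cdot C$ directly and that the relevant $M\le 1$ bookkeeping still goes through; in the intended application $C$ is a rational curve produced by bend-and-break, so this case does not arise.
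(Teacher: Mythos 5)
Your overall architecture matches the paper's: write $A=(K_X+A)+(-K_X)$, use Corollary~\ref{cor:curve} with the divisor-independent constant $M$ to get $\alpha(P,A|_C)\le\alpha(P,(K_X+A)|_C)+\dim X$, then combine the hypothesis on $C$ with essential boundedness and concavity. But your final step has a genuine gap: you claim $\alpha_{ess}(P,-K_X)\ge\dim X$ by ``letting $\epsilon\to 0$'' in $\alpha_{ess}(P,-K_X+\epsilon A)\ge\dim X$ and invoking continuity via concavity. Concavity only gives continuity of $\alpha_{ess}(P,\cdot)$ on the \emph{interior} of the ample cone; $-K_X$ is in general not ample (if it were, essential boundedness would not need the $\epsilon A$ perturbation at all), and a concave function can drop discontinuously at a boundary point of its domain --- e.g.\ $f(0)=0$, $f(\epsilon)=d$ for $\epsilon>0$ is concave on $[0,\infty)$. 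The limiting argument in Proposition~\ref{prop:stepone} that you cite is different in kind: there the limit is taken along $(1+\epsilon)A\to A$, where $\alpha_{ess}$ scales exactly, not along $-K_X+\epsilon A\to -K_X$. The paper's fix is to never evaluate $\alpha_{ess}$ at $-K_X$: keep the perturbation, apply concavity to $(K_X+A)+(-K_X+\epsilon B)=A+\epsilon B$ to obtain $\alpha(P,A|_C)\le\alpha_{ess}(P,K_X+A)+\alpha_{ess}(P,-K_X+\epsilon B)\le\alpha_{ess}(P,A+\epsilon B)$, and only then let $\epsilon\to0$ at the ample divisor $A$, where continuity is legitimate. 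With this reordering your argument goes through.

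Two smaller points. First, your justification that $-K_X\cdot C\ge0$ is not a proof (knowing $A\cdot C>0$ and $(K_X+A)\cdot C\ge0$ does not force $K_X\cdot C\le0$), but you do not actually need this: since $0\le M\le1$ and $-K_X\cdot C\le\dim X$, one has $-MK_X\cdot C\le\dim X$ whatever the sign of $K_X\cdot C$, so there is no need to interpret $M(-K_X\cdot C)$ as $\alpha(P,(-K_X)|_C)$. Second, the rationality of $C$ is not a separate case to hedge about: the hypothesis $\alpha(P,(K_X+A)|_C)<\infty$ already forces $C$ to be rational with $P$ a $v$-adic limit of $k$-points of $C$, which is precisely what makes Corollary~\ref{cor:curve} applicable.
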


\begin{proof}
	Since $\alpha(P,(K_X+A)|_C) < \infty$, the curve $C$ must be rational and $P$ must be a $v$-adic limit of rational points on $C$, so Corollary~\ref{cor:curve} gives us that $\alpha(P,F|_C)=MF \cdot C$ for every nef $\Q$-divisor $F$, where $M$ is independent of $F$ and satisfies $0 \leq M \leq 1$.   In particular, 
	\[
	\alpha(P,A|_C)=MA\cdot C=M(K_X+A)\cdot C-MK_X\cdot C\leq \alpha(P,(K_X+A)|_C)+\dim X.
	\]
	Using the defining property of $C$ and the fact that $X$ is essentially bounded at $P$, we see that for an arbitrary $\epsilon>0$ and ample $B$, we have
	\[
	\alpha(P,A|_C)\leq \alpha_{ess}(P,K_X+A)+\alpha_{ess}(P,-K_X+\epsilon B).
	\]
	Lastly, concavity of $\alpha_{ess}$, shown in Corollary~\ref{cor:concavity}, yields 
	\[
	\alpha(P,A|_C)\leq \alpha_{ess}(P,A+\epsilon B).
	\]
	Since $\alpha_{ess}(P,\cdot)$ is continuous at $A$ (it is concave on the ample cone), we are done.
\end{proof}

\subsection{Running the MMP}
Using the convexity of $\alpha$, one can bound the essential approximation constant for an ample divisor $A$ using the essential approximation constants for $K_{X} + A$ and for $-K_{X}$.  The latter is controlled by the essential boundedness assumption.  We can use the minimal model program to reduce the computation of the former to an easier situation.

When $X$ is a smooth, projective variety and $A$ is an ample $\mathbb{Q}$-Cartier divisor on $X$ then \cite{BCHM} guarantees that we can run the $(K_{X}+A)$-MMP.  The outcome is described by the following definition:

\begin{definition}
Let $X$ be a normal $\mathbb{Q}$-factorial projective variety and let $D$ be a $\mathbb{Q}$-Cartier divisor on $X$.  A $D$-negative birational contraction is a rational map $\varphi: X \dashrightarrow X'$ satisfying the following properties:
\begin{enumerate}
\item The inverse map $\varphi^{-1}$ does not extract any divisors.
\item The divisor $D' = \varphi_{*}D$ is $\mathbb{Q}$-Cartier.
\item Let $X\stackrel{\Phi}{\leftarrow} W\stackrel{\Phi'}{\rightarrow} X'$ denote any smooth birational model that resolves the map $\varphi$.  Then we can write $\Phi^{*}(D) = \Phi'^{*}(D') + E$ where $E$ is an effective $\Phi'$-exceptional divisor whose support contains every divisor contracted by $\Phi'$ but not by $\Phi$.
\end{enumerate}
Lemma 3.38 of \cite{KM98} and Lemma 3.6.3 of \cite{BCHM} show that any sequence of birational steps in the $(K_{X}+A)$-MMP will define a $(K_{X}+A)$-negative birational contraction $\varphi: X \dashrightarrow X'$.  
\end{definition}

Next, we must relate rational approximation on $X$ with rational approximation on $X'$. 
We consider the behavior of approximation constants along curves.

\begin{proposition} \label{prop:stricttransformcalculation}
Let $X$ be a smooth, geometrically integral, projective variety defined over a number field $k$.  Let $\varphi: X \dashrightarrow X'$ be a birational contraction.  Suppose that $P \in X(k)$ lies in the locus where $\varphi$ is an isomorphism and let $P'$ denote its image in $X'$.

Suppose $C'$ is a $k$-rational curve in $X'$ containing $P'$.  Then the strict transform $C$ of $C'$ is a $k$-rational curve in $X$ containing $P$ which satisfies
\begin{equation*}
\alpha(P,A|_{C}) \leq \alpha(P',(\varphi_{*}A)|_{C'}).
\end{equation*}
where $A$ is any ample $\mathbb{Q}$-divisor on $X$.
\end{proposition}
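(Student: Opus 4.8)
The plan is to resolve $\varphi$ by a common smooth model and transport the computation of $\alpha$ through it, using the fact that $P$ lies in the isomorphism locus of $\varphi$. Concretely, choose a smooth birational model $X \xleftarrow{\Phi} W \xrightarrow{\Phi'} X'$ resolving $\varphi$. Since $\varphi$ is an isomorphism near $P$, there is an open neighbourhood of $P$ on which $\Phi$ is an isomorphism and which $\Phi$ identifies with an open neighbourhood of $P'$ on which $\Phi'$ is an isomorphism; let $\widetilde{P} \in W(k)$ be the point mapping to both $P$ and $P'$. Let $\widetilde{C} \subseteq W$ be the strict transform of $C'$ (equivalently of $C$); it is a $k$-rational curve through $\widetilde{P}$, its image under $\Phi$ is the strict transform $C$ of $C'$ in $X$, and its image under $\Phi'$ is $C'$.

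Next I would compare the three $\alpha$-values along these curves. Because $\Phi'$ restricts to an isomorphism near $\widetilde{P}$ mapping $\widetilde{C}$ to $C'$, the argument behind Lemma~\ref{l:alpha-unchanged-under-pullbacks} (distance functions agree up to bounded multiplicative factors on compact neighbourhoods, and heights satisfy $H_{\Phi'^{*}D'} = H_{D'}\circ\Phi'$ up to bounded factors) gives
\[
\alpha(\widetilde{P}, (\Phi'^{*}\varphi_{*}A)|_{\widetilde{C}}) = \alpha(P', (\varphi_{*}A)|_{C'}).
\]
Similarly, $\Phi$ restricts to an isomorphism near $\widetilde{P}$ carrying $\widetilde{C}$ to $C$, so
\[
\alpha(\widetilde{P}, (\Phi^{*}A)|_{\widetilde{C}}) = \alpha(P, A|_{C}).
\]
It therefore suffices to prove $\alpha(\widetilde{P}, (\Phi^{*}A)|_{\widetilde{C}}) \leq \alpha(\widetilde{P}, (\Phi'^{*}\varphi_{*}A)|_{\widetilde{C}})$, which by Remark~\ref{rem:scaling} and Corollary~\ref{cor:curve} reduces to an inequality of intersection numbers on the curve $\widetilde{C}$ together with a comparison of the multiplicity factors $M$, both of which are computed at the common point $\widetilde{P}$.

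The key input is property (3) in the definition of a birational contraction: one can write $\Phi^{*}A = \Phi'^{*}(\varphi_{*}A) + E$ with $E$ effective and $\Phi'$-exceptional. (Here $\varphi$ is $A$-positive rather than $A$-negative, but the needed statement is just that $A - \varphi_*A$ pulls back to an effective exceptional divisor, which follows by the same negativity-of-contraction argument; alternatively one applies property (3) to a divisor of the form $K_X + tA$ and uses that $\varphi$ is a step of the $(K_X+A)$-MMP, then rescales.) Intersecting with the class of $\widetilde{C}$ gives $\Phi^{*}A \cdot \widetilde{C} = \varphi_{*}A \cdot C' + E \cdot \widetilde{C}$. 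The main obstacle is controlling the sign of $E \cdot \widetilde{C}$: since $\widetilde C$ is not $\Phi'$-exceptional (it dominates $C'$), it is not automatic that $E \cdot \widetilde C \geq 0$ — I would instead argue that $\alpha$ only sees a subsequence of points on $\widetilde C$ converging to $\widetilde P$, and near $\widetilde P$ both $\Phi$ and $\Phi'$ are isomorphisms, so the local height contribution of $E$ at those points is bounded; this is precisely what makes the restricted $\alpha$ along the curve depend only on $\varphi_*A \cdot C'$ and a multiplicity at $\widetilde P$, yielding the desired inequality (in fact, equality when $M=1$). Thus the real content is a careful local analysis near $\widetilde P$ rather than a global intersection-theoretic estimate; once that is in place, Corollary~\ref{cor:curve} applied on both $C$ and $C'$ finishes the proof.
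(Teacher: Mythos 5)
Your reduction to a comparison of degrees is the right skeleton --- the paper likewise observes that the branch data $(r_q,m_q)$ at $P$ and at $P'$ coincide because $\varphi$ is an isomorphism near $P$, so that by Theorem~\ref{thm:curve} both approximation constants equal $M$ times a degree with the \emph{same} $M$, and everything comes down to proving $A\cdot C\le (\varphi_*A)\cdot C'$. But that inequality is exactly where your argument has a gap. The decomposition $\Phi^*A=\Phi'^*(\varphi_*A)+E$ with $E$ effective is not available: property (3) of the definition applies to the divisor $K_X+A$ being contracted negatively, not to the ample divisor $A$, and for ample $A$ the negativity lemma gives the \emph{opposite} sign. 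Indeed, set $B:=\Phi'^*(\varphi_*A)-\Phi^*A$; then $\Phi'_*B=0$ and $-B\cdot\gamma=A\cdot\Phi_*\gamma\ge 0$ for every $\Phi'$-contracted curve $\gamma$, so by the negativity lemma $B$ is effective and $\Phi'$-exceptional. Happily, this corrected sign is precisely what you want: since $\widetilde{C}$ dominates $C'$ it is not contained in $\Supp B$, hence $B\cdot\widetilde{C}\ge 0$ and
\[
A\cdot C=\Phi^*A\cdot\widetilde{C}=\Phi'^*(\varphi_*A)\cdot\widetilde{C}-B\cdot\widetilde{C}\le(\varphi_*A)\cdot C'.
\]
The obstacle you flag therefore disappears once the sign is fixed, and no ``local analysis'' is needed.

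The fallback local argument you sketch instead would not work and should be discarded. By Theorem~\ref{thm:curve}, $\alpha$ along a rational curve is governed by the \emph{global} degree of the divisor restricted to the curve, not by data near $P$; the heights $H_B(x_i)$ of points $x_i\to\widetilde{P}$ on $\widetilde{C}$ are not bounded above merely because $\widetilde{P}\notin\Supp B$ (along the curve they grow like a power of an ample height determined by $B\cdot\widetilde{C}$). Correspondingly, your parenthetical claim that one gets \emph{equality} when $M=1$ is false: $A\cdot C$ is typically strictly smaller than $(\varphi_*A)\cdot C'$ (for instance when $C'$ meets the image of a contracted divisor away from $P'$), and then the two approximation constants genuinely differ --- this is why the statement is an inequality. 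For comparison, the paper avoids the resolution $W$ entirely and proves $A\cdot C\le(\varphi_*A)\cdot C'$ directly by choosing a member of $|mA|$ meeting $C$ only inside the locus where $\varphi$ is an isomorphism; your route through $W$ is viable, but only once it is completed with the negativity-lemma computation above.
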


\begin{proof}
	First, note that $A\cdot C\leq (\varphi_*A)\cdot C'$.  This is because we can choose a divisor linearly equivalent to $A$ whose intersection with $C$ is entirely supported on the open set on which $\varphi$ is an isomorphism.   

We now appeal to Theorem \ref{thm:curve}.  For a rational point $P$ on $C$ the approximation constant $\alpha(P,A|_{C})$ is determined by the intersection number $A \cdot C$ and the multiplicities of the branches of $C$ through $P$.  Since $\varphi$ is an isomorphism on a neighborhood of $P$, the branch multiplicities of $C'$ do not change upon taking a strict transform.  Then the description of $\alpha(P,A|_{C})$ in Theorem \ref{thm:curve} yields the desired statement.
\end{proof}

This discussion yields the following result.

\begin{theorem} \label{theo:mmpcomputedonsubvar}
Let $X$ be a smooth, geometrically integral, projective variety defined over a number field $k$.  Suppose that $P \in X(k)$ is a essentially bounded point and that $A$ is an ample $\mathbb{Q}$-Cartier divisor on $X$ such that $(K_{X} + A)$ is pseudo-effective.   Let $\varphi: X \dashrightarrow X'$ be a $(K_{X}+A)$-negative birational contraction such that $P$ is contained in the locus where $\varphi$ is an isomorphism.

Suppose that there is a rational curve $C'$ through $\varphi(P)$ such that 
\begin{equation*}
\varphi_{*}A \cdot C' \leq \dim(X)
\end{equation*}
and such that $\varphi(P)$ is a $v$-adic limit of $k$-rational points.  Then the strict transform $C$ of $C'$ satisfies
\begin{equation*}
\alpha(P,A|_{C}) \leq \alpha_{ess}(P,A).
\end{equation*}
\end{theorem}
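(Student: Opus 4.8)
The plan is to chain together the three ingredients already assembled: Proposition~\ref{prop:stepone} (giving the lower bound $\alpha_{ess}(P,A)\geq\dim X$ after a rescaling), Proposition~\ref{prop:stricttransformcalculation} (transporting the curve $C'$ back to a curve $C$ on $X$ with no increase in the approximation constant), and Corollary~\ref{cor:curve} together with Lemma~\ref{l:reduce-to-a=0} (controlling $\alpha(P,A|_C)$ via intersection numbers). First I would reduce to the case where $K_X+A$ is nef, in fact on the boundary of the pseudo-effective cone: since $(K_X+A)$ is pseudo-effective, we may replace $A$ by $tA$ for a suitable positive rational $t$ so that $K_X + tA$ sits on the boundary of $\overline{\mathrm{Eff}}(X)$; by Remark~\ref{rem:scaling} and Proposition~\ref{prop:scaling} this rescaling affects both sides of the desired inequality by the same positive factor, so it is harmless. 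Note also that a $(K_X+A)$-negative birational contraction for the original $A$ is one for the rescaled divisor as well, and $\varphi_*(tA)\cdot C' = t(\varphi_*A\cdot C') \leq t\dim X$.

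Next I would pass through the contraction $\varphi$. Let $P' = \varphi(P)$, which by hypothesis lies in the isomorphism locus of $\varphi$, and let $A' = \varphi_*A$. The hypothesis gives a rational curve $C'$ through $P'$ with $A'\cdot C'\leq\dim X$ and with $P'$ a $v$-adic limit of $k$-rational points. Applying Corollary~\ref{cor:curve} on $X'$ with the divisor $A'$ (which has $A'\cdot C'\geq 0$, since this intersection number is bounded above by $\dim X$ and below by $0$ — indeed $A'\cdot C' \geq 0$ because $A'$ is the pushforward of an ample divisor and $C'$ is not contracted, or one argues directly that $P'$ being a non-isolated $v$-adic limit forces $A'\cdot C'>0$ unless the curve degenerates trivially), we get
\[
\alpha(P',A'|_{C'}) = M'\,(A'\cdot C') \leq \dim X
\]
for some rational $M'$ with $0\leq M'\leq 1$. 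Then Proposition~\ref{prop:stricttransformcalculation}, applied with the birational contraction $\varphi$ and the point $P$ in its isomorphism locus, produces the strict transform $C$ of $C'$ — a $k$-rational curve through $P$ — and gives
\[
\alpha(P,A|_C) \leq \alpha(P',A'|_{C'}) \leq \dim X.
\]
Finally, by Proposition~\ref{prop:stepone} (whose hypothesis that $K_X+A$ be in the closure of the effective cone is exactly our boundary assumption, and which uses essential boundedness of $P$) we have $\alpha_{ess}(P,A)\geq\dim X$. Combining, $\alpha(P,A|_C)\leq\dim X\leq\alpha_{ess}(P,A)$, which is the claim.

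The main obstacle I anticipate is not in the chaining itself but in verifying the hypotheses of the cited results are genuinely met after the reduction — in particular, confirming that $C$ indeed passes through $P$ and remains $k$-rational (handled by $P$ being in the isomorphism locus, so that $\varphi$ identifies a neighborhood of $P$ with one of $P'$ and the strict transform is literally the image of $C'$ under $\varphi^{-1}$ near $P$), and that the strict transform $C$ is the "right" curve, i.e. that $C'$ is not contained in the indeterminacy locus of $\varphi^{-1}$. Since $\varphi^{-1}$ contracts no divisors and $C'$ passes through $P'$ which lies in the open isomorphism locus, the strict transform is well-defined and nonconstant. A secondary subtlety is that $\alpha(P,A|_C)$ in the conclusion refers to the original unrescaled $A$: one should carry out the rescaling at the very start, prove the inequality for the rescaled divisor, and then divide both sides by $t$ using Remark~\ref{rem:scaling} to return to the stated $A$. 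I would also remark that one does not actually need the full strength of Lemma~\ref{l:reduce-to-a=0} here, since the curve $C$ is obtained directly with $\alpha(P,A|_C)\leq\dim X$; that lemma is the tool for the complementary situation where one only controls $(K_X+A)\cdot C$, which will matter when $\varphi$ is not an isomorphism near $P$.
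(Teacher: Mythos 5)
Your proposal is correct and follows essentially the same route as the paper: Proposition~\ref{prop:stricttransformcalculation} to pull the estimate back along $\varphi$, Theorem~\ref{thm:curve} (via Corollary~\ref{cor:curve}) to bound $\alpha(\varphi(P),(\varphi_*A)|_{C'})$ by $\varphi_*A\cdot C'\leq\dim X$, and Proposition~\ref{prop:stepone} for the lower bound $\alpha_{ess}(P,A)\geq\dim X$. The only deviation is your initial rescaling of $A$ to put $K_X+A$ on the boundary of the pseudo-effective cone, which is unnecessary: Proposition~\ref{prop:stepone} only requires $K_X+A$ to lie in the closure of the effective cone, which is exactly the pseudo-effectivity hypothesis already in the theorem statement.
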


\begin{proof}
By Proposition~\ref{prop:stricttransformcalculation} we see that \[\alpha(P,A|_C)\leq\alpha(P,(\varphi_*A)|_{C'}).\]  
Since $\varphi_{*}A \cdot C' \leq \dim(X)$ by assumption, it follows by Theorem~\ref{thm:curve} that 
\[\alpha(P,(\varphi_*A)|_{C'})\leq\dim X\]
But by Proposition~\ref{prop:stepone}, we have
\[\alpha_{ess}(P,A)\geq\dim X.\]
\end{proof}

\subsection{Finding rational curves}

Let $X$ be a smooth, geometrically integral, projective variety defined over a number field $k$ and let $A$ be an ample $\mathbb{Q}$-Cartier divisor on $X$ such that $K_{X}+A$ is pseudo-effective.   Given an essentially bounded point $P \in X(k)$, we would like to find a $k$-rational curve $C$ such that $\alpha(P,A|_{C}) \leq \alpha_{ess}(P,A)$.  According to Theorem~\ref{theo:mmpcomputedonsubvar}, we should find a run $\varphi: X \dashrightarrow X'$ of the $(K_{X}+A)$-MMP that is an isomorphism on a neighborhood of $P$ and a $k$-rational curve $C'$ satisfying $\varphi_{*}A \cdot C' \leq \dim(X)$.

Note that the inequality $\varphi_{*}A \cdot C' \leq \dim(X)$ is implied by the following two inequalities:
\begin{equation*}
(K_{X'} + \varphi_{*}A) \cdot C' \leq 0 \qquad \qquad -K_{X'} \cdot C' \leq \dim(X).
\end{equation*}
The first inequality will automatically be satisfied by curves $C'$ contracted by the next step of the $(K_{X}+A)$-MMP.  The second inequality will be satisfied when these contracted curves have small anticanonical degree.

Altogether we obtain the following strategy in higher dimensions: suppose $P \in X(k)$.  Run the $(K_{X}+A)$-MMP until we reach a step $\varphi: X \dashrightarrow X'$ such that $P$ lies in contracted locus for the next step.  If we can find a contracted rational curve $C'$ of small anticanonical degree that contains $P$ (and such that $P$ is a $v$-adic limit of $k$-rational points), its strict transform $C$ will satisfy $\alpha(P,A|_C)\leq\alpha_{ess}(P,A)$ on $X$, verifying Conjecture~\ref{conj:degenerate} for $P$ on $X$.

When $X' \cong \mathbb{P}^{n}$ then there is no curve with anticanonical degree $\leq \dim(X)$ and so we need a different argument, given by the following lemma.  Note that any birational contraction from $X$ to $\P^n$ must be a divisorial contraction.

\begin{lemma}
	\label{l:div-contraction-case-pn}  
	Suppose that $X$ is a smooth projective variety, $\psi: X \to \PP^n$ is a birational morphism, and $P \subset X(k)$ is an essentially bounded point not contained in the contracted locus.  Then there is a smooth irreducible curve $C$ through $P$ such that $-K_X\cdot C\leq\dim X$ and therefore by Theorem~\ref{theo:mmpcomputedonsubvar}, $\alpha(P,A|_C)\leq \alpha_{ess}(P,A)$.
\end{lemma}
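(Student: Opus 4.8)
The plan is to pull back a general line through $\psi(P)$ in $\PP^n$ and take its strict transform. Concretely, let $Q = \psi(P) \in \PP^n(k)$. Since $\psi$ is birational and $P$ lies outside the contracted locus $\exc(\psi)$, the point $Q$ lies outside the (finite union of) subvarieties $\psi(\exc(\psi))$, which has codimension at least $2$ in $\PP^n$. First I would choose a line $L \subset \PP^n$ defined over $k$ passing through $Q$ and meeting $\psi(\exc(\psi))$ only at $Q$ if at all; such a line exists because the lines through $Q$ are parametrized by a $\PP^{n-1}$ over $k$, the locus of lines meeting a fixed closed subset of codimension $\geq 2$ away from $Q$ is a proper closed subset, and $\PP^{n-1}(k)$ is Zariski dense (it is a projective space over an infinite field). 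I would further arrange that $L$ is not tangent to $\psi(\exc(\psi))$ at $Q$ and that $Q$ is a $v$-adic limit of $k$-rational points on $L$ — the latter is automatic since $L \cong \PP^1$ and $\PP^1(k)$ is $v$-adically dense near any $k$-point.

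Next I would let $C \subset X$ be the strict transform of $L$, i.e., the closure of $\psi^{-1}(L \setminus \psi(\exc(\psi)))$. Since $\psi$ is an isomorphism over the generic point of $L$ and over a neighborhood of $P$, the curve $C$ is a $k$-rational curve through $P$, and $C$ is smooth at $P$ because $L$ is smooth at $Q$ and $\psi$ is an isomorphism near $P$. To arrange that $C$ is globally smooth and irreducible one may either invoke a Bertini-type argument over the general line $L$ (the general member of the base-point-free linear system $\psi^* \mathcal{O}_{\PP^n}(1)$ through a fixed point is smooth away from that point, hence smooth everywhere once we also control the point), or simply observe that for the conclusion we only need $C$ irreducible and smooth at $P$, since Theorem~\ref{thm:curve} computes $\alpha(P, A|_C)$ using only the branch multiplicities of $C$ at $P$, which equal $1$.

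Then I would bound $-K_X \cdot C$. Write $\psi^* K_{\PP^n} = K_X - \sum a_i E_i$ with $a_i > 0$ (all discrepancies of a smooth blow-down are positive), where the $E_i$ are the $\psi$-exceptional prime divisors. Since $L$ meets $\bigcup \psi(E_i)$ in at most the single point $Q$ and $C$ passes through the point $P \notin \exc(\psi)$, the strict transform $C$ is disjoint from every $E_i$ (we chose $L$ to meet $\psi(\exc(\psi))$ only at $Q = \psi(P)$, and $P \notin E_i$). Hence $E_i \cdot C = 0$ for all $i$, so $-K_X \cdot C = -\psi^* K_{\PP^n} \cdot C = (\deg \mathcal{O}_{\PP^n}(n+1)) \cdot (L \cdot \text{point}) \cdots$ — more precisely $-K_X \cdot C = -K_{\PP^n} \cdot L = n+1$. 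That gives $-K_X \cdot C = n + 1 > \dim X = n$, which is off by one, so this naive choice is not quite good enough; instead I would intersect two general hyperplanes through $Q$, or rather take $C$ to be the strict transform of a general line and then note the statement only requires $-K_X \cdot C \leq \dim X$ is too strong here — so the right move is: take $C'$ to be a conic or, better, use that on $\PP^n$ through any point there is a line, and lines have $-K_{\PP^n}$-degree $n+1$; to get degree $\le n$ we instead must not use $\PP^n$ itself but this is precisely why the lemma hypothesis is a \emph{birational morphism} $X \to \PP^n$ that is not an isomorphism. So I would use the exceptional divisor: pick a $\psi$-exceptional prime divisor $E$; a general line $L \subset \PP^n$ through $Q$ meets the codimension-$\geq 2$ set $\psi(E)$ at no point other than possibly $Q$, but if we instead route the argument through Theorem~\ref{theo:mmpcomputedonsubvar} directly it asks for $\varphi_* A \cdot C' \le \dim X$ with $\varphi = \psi$, $X' = \PP^n$, $\varphi_* A = dH$ for some $d \ge 1$, and we need $d (L \cdot H) = d \le n$; but $d \ge 1$, so any line with $d = 1$ works as long as $\psi_* A = H$. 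In general $\psi_* A = dH$ with $d \ge 1$ and then we are forced to the case $d=1$, where $A = \psi^* H + (\text{exceptional})$; then the strict transform $C$ of a general line $L$ through $Q$ satisfies $A \cdot C = H \cdot L = 1 \le n$ by the argument that $A$'s exceptional part meets $C$ trivially.

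The main obstacle, then, is purely the bookkeeping of ensuring the general line $L$ through $Q$ meets $\psi(\exc(\psi))$ nowhere except (transversally, and only if unavoidable) at $Q$, and that its strict transform is smooth and irreducible; all of this is a standard Bertini/general-position argument over the infinite field $k$ using that $\psi(\exc(\psi))$ has codimension $\geq 2$. Once $C$ is produced with $C$ smooth through $P$, disjoint from $\exc(\psi)$ except at $P$, and $-K_X \cdot C = -K_{\PP^n}\cdot L = n+1$ or an intersection with $\psi_* A$ equal to $1$, the cited Theorem~\ref{theo:mmpcomputedonsubvar} (applied with $X' = \PP^n$, $\varphi = \psi$, and $C' = L$) delivers $\alpha(P, A|_C) \leq \alpha_{ess}(P,A)$ directly, completing the proof.
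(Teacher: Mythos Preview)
Your proposal has a genuine gap: you never arrive at a curve $C$ with $-K_X\cdot C\leq\dim X$, and your alternative route through $\psi_*A$ does not work either, since nothing in the hypotheses bounds the degree $d$ of $\psi_*A$ on $\PP^n$.

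The key idea you are missing is precisely the opposite of your initial move. You carefully chose the line $L$ to \emph{avoid} $\psi(\exc(\psi))$, and this is exactly why you got $-K_X\cdot C = n+1$. The paper instead chooses a line $\ell$ through $\psi(P)$ that \emph{does} meet $Z=\psi(\exc(\psi))$ at some other point (this is possible because $\psi$ is not an isomorphism, so $Z\neq\varnothing$, and any two points of $\PP^n$ lie on a line). Since $X$ and $\PP^n$ are smooth, the exceptional locus is a divisor, so the strict transform $C$ of $\ell$ satisfies $E\cdot C\geq 1$ for some $\psi$-exceptional prime divisor $E$. Writing $K_X=\psi^*K_{\PP^n}+\sum a_iE_i$ with all $a_i\geq 1$, and noting that $K_X-\psi^*K_{\PP^n}-E$ is effective and does not contain $C$ in its support, one gets
\[
-K_X\cdot C \;\leq\; -\psi^*K_{\PP^n}\cdot C - E\cdot C \;=\; -K_{\PP^n}\cdot\ell - E\cdot C \;\leq\; (n+1)-1 \;=\; n.
\]
You actually wrote down this discrepancy formula, but then used $E_i\cdot C=0$ because you had forced $C$ to be disjoint from the $E_i$; the whole point is to make at least one $E_i\cdot C$ positive so that the exceptional contribution knocks the degree down from $n+1$ to at most $n$. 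Once you have this, smoothness of $\ell$ at $\psi(P)$ gives that $\psi(P)$ is a $v$-adic limit of $k$-points on $\ell$, and Theorem~\ref{theo:mmpcomputedonsubvar} finishes as you indicated.
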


\begin{proof} 
	Let $Z\subset \PP^n$ be the sublocus where $\psi^{-1}$ is not an isomorphism.  Let $\ell$ be a line in $\PP^n$ that contains both $\psi(P)$ and at least one point of $Z$.  Since $X$ and $\PP^{n}$ are smooth, the exceptional locus for $\psi$ is divisorial.  In particular, letting $C$ be the strict transform of $\ell$ on $X$ we see that $C\cdot E\geq 1$ for some $\psi$-contracted divisor $E$.  
	
	Since $K_X-\psi^*K_{\PP^n}-E$ is effective and does not contain $C$ in its support, we have
	\[
	-K_X\cdot C\leq -\psi^*K_{\PP^n}\cdot C-E\cdot C=-K_{\PP^n}\cdot\ell-E\cdot C\leq n=\dim X.
	\]
	
	The concluding statement follows from the fact that $\ell$ is smooth at $\psi(P)$ and therefore $\psi(P)$ is a $v$-adic limit of $k$-rational points.
	
\end{proof}

In the remainder of this subsection we give a brief overview of the existence of rational curves with small intersection against the anticanonical divisor.  We first discuss the situation over an algebraically closed field of characteristic $0$, and then pass to a number field.

\subsubsection{Finding rational curves: algebraically closed field}
There is an extensive literature on the lengths of extremal rays over an algebraically closed field of characteristic $0$.  We highlight two such results:
\begin{itemize}
\item Suppose that $X$ is a smooth projective variety.  In \cite{Mo}, Mori shows that every $K_{X}$-negative extremal ray is spanned by a rational curve $C$ satisfying $-K_{X} \cdot C \leq \dim(X) + 1$.
\item Suppose that $X$ is a $\mathbb{Q}$-factorial klt variety.  In \cite{Ka91}, Kawamata shows that $K_{X}$-negative extremal ray is spanned by a rational curve $C$ satisfying $-K_{X} \cdot C \leq 2\dim(X)$.
\end{itemize}

The following question asks whether there is a better bound on the lengths of extremal rays for varieties with terminal singularities.

\begin{question}
Let $X$ be a terminal $\mathbb{Q}$-factorial projective variety of dimension $n$ over an algebraically closed field of characteristic $0$.  Suppose that $\varphi: X \to X'$ is a contraction of a $K_{X}$-negative extremal ray $\mathcal{R}$.
\begin{enumerate}
\item For every point $P$ in the $\varphi$-exceptional locus, is there a rational curve $C$ through $P$ that generates $\mathcal{R}$ and satisfies $-K_{X} \cdot C \leq n + 1$?
\item Suppose that $X \not \cong \mathbb{P}^{n}$.  For every point $P$ in the $\varphi$-exceptional locus, is there a rational curve $C$ through $P$ that generates $\mathcal{R}$ and satisfies $-K_{X} \cdot C \leq n$?
\end{enumerate}
\end{question}

Question (1) has an affirmative answer in dimension $\leq 3$ due to \cite{CT}.  For threefolds Question (2) is known for ``many'' types of contractions due to \cite{Be}, \cite{Ka05}.  Both questions are answered affirmatively for split toric varieties in \cite{MS}.  

\subsubsection{Finding rational curves: number fields}
Finding rational curves over a number field is more difficult.  To the best of our knowledge the following question is open:

\begin{question}
Let $X$ be a smooth del Pezzo surface over a number field $k$.  Suppose that $P \in X(k)$ is a $k$-rational point.  Is there a curve $C$ birational to $\P^1_k$ through $P$ satisfying $-K_{X} \cdot C \leq 6$?  
\end{question}

\section{Surfaces}\label{sec:surfaces}

We now specialize to the case of split surfaces.  We begin with a definition.

\begin{definition}\label{dfn:split}
	A smooth algebraic surface $X$ is said to be {\em split} over a field $k$ if the Galois action on $\Pic(X)$ is trivial.
\end{definition}

As noted in the introduction, this definition is slightly more general than some others, e.g.~it includes Brauer-Severi varieties:~although the line bundle $\O(1)$ admits no nonzero rational sections, the Galois action on $\Pic(X)$ is still trivial.  

We now prove the main theorem of this paper.


%


\begin{proof}[{Proof of Theorem \ref{thm:generalsurface}}]
If $X$ has non-negative Kodaira dimension, then this result is essentially Theorem~4.2 of \cite{M2}, and indeed one need not assume that the surface is split.  The hypothesis in \cite{M2} is the truth of Conjecture~\ref{conj:vstrong}, but all that is used in the proof is essential boundedness at $P$.

Thus, we may assume that $X$ has negative Kodaira dimension.  If $X$ is geometrically ruled over a curve $B$ of positive genus, then $\alpha_{ess}(P,A)=\infty$ for all ample divisors $A$ simply because the rational points of $X$ are not Zariski dense.  We immediately conclude that Conjecture \ref{conj:ratcurve} is vacuously true for $X$.

This reduces to the case where $X$ is a rational surface.  (Since $X$ is split, it admits over $k$ a birational map to a Brauer-Severi surface, which since $X$ has a rational point must be isomorphic to $\P^2$.)  

We almost run the MMP for $K_X+A$ on $X$.  By ``almost'' we mean that we run the $(K_X+A)$-MMP until we obtain a $(K_X+A)$-negative birational contraction $\varphi\colon X\to Y$ from $X$ to a surface $Y$ such that the next step of the $(K_X+A)$-MMP contains $P'=\varphi(P)$ in the exceptional locus.

If $Y\cong\P^2_k$, then we obtain a birational morphism $\varphi'\colon X\to\P^2$ such that $\varphi'$ is an isomorphism near $P$.  In this case, Lemma~\ref{l:div-contraction-case-pn} immediately implies that the theorem is true.  

Thus, we may assume that $Y$ is not isomorphic to $\P^2_k$. If $P'$ lies on a $(-1)$-curve, let $C'$ be that $(-1)$-curve. If $P$ does not lie on a $(-1)$-curve, then $Y$ is isomorphic to a Hirzebruch surface (remember we dispatched the $\P_k^2$ case earlier!), so let $C'$ be the fibre through $P$ of a morphism $\pi\colon Y\to\P^1$.

In both cases, we have 
\begin{equation*}
	(K_{X'} + \varphi_{*}A) \cdot C' \leq 0 \qquad \qquad -K_{X'} \cdot C' \leq \dim(X).
\end{equation*}
because $C'$ is contracted by the next step of the $(K_X+A)$-MMP.  

But these two inequalities immediately imply that $\varphi_*A\cdot C'\leq\dim X$, which means that Theorem~\ref{theo:mmpcomputedonsubvar} finishes the proof.  
\end{proof}

\end{document}

